\theoremstyle{thmstyleone}%
\newtheorem{theorem}{Theorem}%  meant for continuous numbers
\newtheorem{proposition}[theorem]{Proposition}%
\theoremstyle{thmstyletwo}%
\newtheorem{remark}{Remark}%
\theoremstyle{thmstylethree}%
\newtheorem{definition}{Definition}%
\begin{document}

\title[Article Title]{ A Zero-Sum Differential Game with Exit  Time}

%%=============================================================%%
%% Prefix	-> \pfx{Dr}
%% GivenName	-> \fnm{Joergen W.}
%% Particle	-> \spfx{van der} -> surname prefix
%% FamilyName	-> \sur{Ploeg}
%% Suffix	-> \sfx{IV}
%% NatureName	-> \tanm{Poet Laureate} -> Title after name
%% Degrees	-> \dgr{MSc, PhD}
%% \author*[1,2]{\pfx{Dr} \fnm{Joergen W.} \spfx{van der} \sur{Ploeg} \sfx{IV} \tanm{Poet Laureate}
%%                 \dgr{MSc, PhD}}\email{iauthor@gmail.com}
%%=============================================================%%

\author*[1]{\fnm{Ekaterina} \sur{Kolpakova}}\email{eakolpakova@gmail.com}

%% \author[2,3]{\fnm{Second} \sur{Author}}\email{iiauthor@gmail.com}
%% \equalcont{These authors contributed equally to this work.}

%% \author[1,2]{\fnm{Third} \sur{Author}}\email{iiiauthor@gmail.com}
%% \equalcont{These authors contributed equally to this work.}

\affil*[1]{%\orgdiv{},
\orgname{Krasovskii Institute of Mathematics and Mechanics, Ural Branch of the Russian Academy of
		Sciences}, \orgaddress{\street{S. Kovalevskaya Str,16}, \city{Yekaterinburg}, \postcode{620108},  \country{Russia}}}

%%\affil[2]{\orgdiv{Department}, \orgname{Organization}, %%\orgaddress{\street{Street}, \city{City}, \postcode{10587}, \state{State}, %%\country{Country}}}

%%\affil[3]{\orgdiv{Department}, \orgname{Organization}, %%\orgaddress{\street{Street}, \city{City}, \postcode{610101}, \state{State}, %%\country{Country}}}

%%==================================%%
%% sample for unstructured abstract %%
%%==================================%%

\abstract{The paper is concerned with a zero-sum differential game in the case where a payoff is determined by the exit time, that is, the first time when the system leaves the game domain. Additionally, we assume that a part of domain's boundary is a lifeline where the payoff is infinite.  

 Hereby,  the examined problem  generalizes   the well-known time-optimal problem as well as  time-optimal problem with lifeline. The main result of the paper relies on the solution to the Direchlet problem for the Hamilton-Jacobi equation associated with the game with exit time. We prove the existence of the value function for examined problem and construct  suboptimal feedback strategies under assumption that the associated Dirichlet problem for the Hamilton-Jacobi equation admits a viscosity/minimax solution.  Additionally, we derive a sufficient condition  of existence result to this  Dirichlet problem. }

%%================================%%
%% Sample for structured abstract %%
%%================================%%

\keywords{differential game, value of the game,  suboptimal strategies, Hamilton---Jacobi equation, Dirichlet problem, time-optimal problem }

%%\pacs[JEL Classification]{D8, H51}

%%\pacs[MSC Classification]{49L12 , 49L25, 49N70}

\maketitle

\section{Introduction}\label{sec1}
The paper is concerned with a two-player differential game on a domain with a functional depending on an exit time and a realized trajectory. Namely, it is  assumed that we are given with 
\begin{itemize}
    \item a domain $G\in \mathbb{R}^d$;
    \item two sets: target set $M_1$ and lifeline $M_2$ such that $M_1\cap M_2=\varnothing$, while $M_1\cup M_2=\partial G$;
    \item a control system governed by two players
    \[\frac{d}{dt}x(t)=f(x(t),p(t),q(t)).\]
\end{itemize}
We let $\tau(x(\cdot))$ be an exit time from $G$, i.e., $\tau(x(\cdot))$ is the first time when the motion $x(\cdot)$ touches the set  $\partial G$. The quality of the realized control process is evaluated as \[\sigma(x(t))+\int_0^{\tau(x(\cdot)) }g(x(s))ds.\] We  assume that the terminal part  of payoff $\sigma$  takes bounded values on $M_1$ and is equal to $+\infty$ on $M_2$, whilst $g$ is strictly positive.

The examined problem generalizes a time optimal differential game as well as a time-optimal problem with a lifeline. Indeed, if we put $M_2=\varnothing$, $\sigma\equiv 0$ and $g\equiv 1$, we obtain the standard time-optimal differential game. Letting $M_2$ to be nonempty
\[\sigma(x)\triangleq\left\{\begin{array}{cc}
     0, & x \in M_1 \\
     +\infty, & x \in M_2 
\end{array}\right., \ \ g\equiv 1,\] we arrive at the time-optimal differential game with lifeline.

This  differential game with exit time was first formulated in~\cite{Krasovskii}. Surprisingly, up to now it was not examined. The case of general payoff depending on exist time was studied only for the case of one decision maker in~\cite{Ye,Cannarsa}. Simultaneously, there is a great interest to time-optimal differential games and  time optimal differential games with lifeline.
Apparently, the study of these problems was initiated by the seminal book by R.~Isaacs~\cite{Isaaks}.
Nowadays, there are three approaches in this area of differential games. 

The first approach uses various finite-dimensional geometric constructions to design suboptimal feedback strategies (see~\cite{Pachter,Pachter1, Petrosjan, Petrosjan1, Basar,yan}) and references therein. The authors consider given geometrical form of a target set or a lifeline.

The second approach relies on viscosity/minimax solution to the Dirichlet problem for the Hamilton-Jacobi equation associated with the differential game. Within this approach it is shown that the value function of the time-optimal game exists and  is determined by the solution of the Dirichlet problem~\cite{Bardi, Subbotin}. Additionally, it is proved that given a viscosity/minimax solution to the Dirichlet problem of Hamilton-Jacobi equation associated with differential game, one can construct optimal strategies of the players (see~\cite{Subbotin} for standard time-optimal differential game and~\cite{Kumkov} for time optimal differential game with lifeline). The papers dealing with optimal control problem with exist time~\cite{Ye,Cannarsa, Motta} also follows this framework.

The third approach is close to the previous one. Within it, tools of the viability theory are involved  to derive domains where the first or second player wins~\cite{Cardaliaguet,Cardaliaguet1,Patsko,Krasovskii}. The players' optimal strategies are constructed using these domains.  Recall also that this approach leads to the description of value function due to the fact that it can be characterized in the terms of viability theory~\cite{Krasovskii, Subbotin}.  Additionaly, the authors assume the dynamic advantage of one or both players in the neighbourhood of the boundary of a target set or a lifeline.

The approach of the paper  is close to the second approach. We adopt the feedback formalization by first proposed by Krasovskii and Subbotin in~\cite{Krasovskii}. Recall that this formalization implies that the players form their controls stepwise while
the strategy is an arbitrary function of the state. The main result of the paper is the construction of  suboptimal players' strategies. It relies on a viscosity/minimax solution of the Dirichlet problem for Hamilton---Jacobi equation that is associated with examined differential game with exit time. The general scheme follows the approach proposed in~\cite{Subbotin} for standard time optimal differential game. Moreover, we present a condition that guarantees the existence of the viscosity/minimax solution in this Dirichlet problem.

The rest of the paper is organized as follows. In Sect.2 we introduce the differential game with lifeline. The feedback formalization and the definition of the value function are discussed in Sect.3. The next section provides a condition guaranteeing the existence of the viscosity/minimax solution to the Dirichlet problem associated with the differential game. Finally, the main result is in Sect.5. Here, we construct the players' suboptimal strategies and proof of the existence of the value function for the differential game with exit time. The latter is equal to the Kruzhkov transform of the viscosity/minimax solution to Dirichlet  problem for Hamilton---Jacobi equation.

\section{Problem setting}\label{sec2}

We study a differential game with the dynamics given by
\begin{equation} \label{Kolpakova:eq2}
	\dot{x}(t)=f(x(t),p(t), q(t)), \ 
\end{equation}
and the initial condition $x(0)=x_0.$
Here, $t\geq0$; $x(t)\in \mathbb{R}^d$ is the state of the system at the time $t$; the control of the first player is $p(t)\in P$; the control of the second player is $q(t)\in Q$; $P,Q$ are compacts.
To introduce the payoff, we consider two  closed sets  $M_1\subset \mathbb{R}^d$, $M_2\subset \mathbb{R}^d$ such that  the   distance between them is strictly positive. The boundary of the sets $M_1$ and $M_2$ is continuous. The area of the game is  $G=\mathbb{R}^d\setminus (M_1\cup M_2)$, $\partial G$ is the boundary of the set $M_1\cup M_2$. We denote by the symbol $\mathcal{M}_1$ the boundary of the set $M_1$ and by the symbol $\mathcal{M}_2$ the  boundary of the set $M_2$. 
 
 We define the  functional on the space of continuous functions $x(\cdot): \mathbb{R}^+ \rightarrow \mathbb{R}^d$ by the rule
\begin{equation}\label{time}
	\tau(x(\cdot))=\min\{ t\geq0: x(t)\in M_1\cup M_2\}.
\end{equation}
If $x(t)\not\in M_1\cup M_2$ for all $t\geq0$, then we set $\tau(x(\cdot))=+\infty$. The quantity $\tau(x(\cdot))$ is the exit time for the trajectory $x(\cdot)$.

For $(x(\cdot),p(\cdot),q(\cdot))$ satisfying~\eqref{Kolpakova:eq2}, the payoff functional is equal to
\begin{equation}\label{funcl}
	J(x(\cdot),p(\cdot),q(\cdot))=\int\limits_{0}^{\tau(x(\cdot))} g(x(t))dt+ \sigma(x(\tau(x(\cdot)))),
\end{equation}
where  $\tau(x(\cdot))$ is defined by~\eqref{time}. We assume that the first (respectively second) player tries to minimize (respectively maximize) the payoff functional.
Notice, that if $\tau(x(\cdot))=+\infty$ then $J(x(\cdot),p(\cdot),q(\cdot))=+\infty$.

We impose the following assumptions:
\begin{description}
	\item[A1] the functions $f$, $g$  are continuous and the function $f$ satisfies the sublinear condition:
	$$ \|f(x,p,q)\|\leq R_f(1+\|x\|)  \ \forall  (x,p,q)\in\mathbb{R}^d \times P\times Q ;$$ 	
	 \item[A2] the functions $f$, $g$ satisfy the Lipschitz condition w.r.t $x$:
	$$||f(x+y,p,q)-f(x,p,q)|| +||g(x+y)-g(x)||\leq \lambda\|y\|,$$  for each $  x,y\in \mathbb{R}^d$,  $p\in P$, $q\in Q$;
	\item[A3] the saddle-point condition in a small game (the Isaacs condition) is fulfilled:  for every  $s\in \mathbb{R}^d$, $x\in \mathbb{R}^d$  $$\min\limits_{p\in P}\max\limits_{q\in Q} \left\langle s, f(x,p,q) \right\rangle=\max\limits_{q\in Q} \min\limits_{p\in P} \left\langle s, f(x,p,q) \right\rangle; $$
	\item[A4] there is a constant $b>0$ such that $\forall x\in \mathbb{R}^d$ $b\leq g(x)$;
	\item[A5] there exists a constant $\Sigma>0$ such that $\sigma(x)$ takes values in $[0, \Sigma]$ wherever $ x\in \partial M_1$ and $\sigma(x)=+\infty$, if $ x\in \partial M_2$. Additionally, 
  the function $\sigma$  is Lipschitz continuous for some constant $L$ on the set $\partial M_1$.
  \end{description}

\begin{remark}
If the function $g(x)\equiv 1$ and $\sigma(x)\equiv 0$, as $\in \mathcal{M}_1$, the set $M_2$ is empty,
then	problem~\eqref{Kolpakova:eq2}--\eqref{funcl} is reduced to the time optimal problem.

If the function $g(x)\equiv 1$; $\sigma(x)\equiv 0,$ as $ x\in \mathcal{M}_1$  and $\sigma(x)=+\infty,$ as $ x\in \mathcal{M}_2$, then	problem~\eqref{Kolpakova:eq2}--\eqref{funcl} is reduced to the time optimal problem with lifeline.
	\end{remark}

\section{Feedback strategies and value function}\label{sec3}
At first, let us describe feedback strategies of the players within the framework of Krasovskii---Subbotin approach~\cite{Krasovskii}.

A function $U:G \rightarrow P$ (respectively, a function $V:G \rightarrow Q$) is  called a
feedback strategy of the first (respectively, the second) player.
Now, let us introduce the motions generated by the strategies. We start with the first player.
Let a strategy $U$ and an  initial point $x_0 \in \mathbb{R}^d$ be given.
Further, we assume that the first player  chooses a countable set of time corrections
$\Delta=\{t_i\}_{i=0}^\infty$ such that $t_{i+1}>t_i$ and $\inf(t_{i+1}-t_i)>0$.

A corresponding step-by-step motion $x(\cdot)$ on each interval $[t_i,t_{i+1})$ satisfies the following conditions:
\begin{equation} \label{step1eq}
  \dot{x}(t)=f(x(t),U(x(t_i)),q(t)), \ \forall q(\cdot)\in L^\infty([t_i,t_{i+1}],Q),   t\in[t_i,t_{i+1}),   i=0,1,2,\ldots  
\end{equation}
 with initial condition $x(0)=x_0$, that is the set of step-by-step
motions of the differential equation~\eqref{step1eq}, generated by $U$.
We denote by the symbol $X(x_0,U,\Delta)$  this set of step-by-step motions.

Similarly, the second player chooses a feedback strategy $V:\mathbb{R}^d \rightarrow Q$ and a partition $\Delta$.   We construct the set of continuous functions  $x(\cdot)$  on each interval  $[t_i,t_{i+1})$ satisfying the differential equation
\begin{equation} \label{str-dif}
   \dot{x}(t)= f(x(t),p(t),V(x(t_i))),  \  \forall p(\cdot)\in L^\infty([t_i,t_{i+1}],P), t\in[t_i,t_{i+1}),   i=0,1,2,\ldots 
\end{equation}
with initial condition $x(0)=x_0$, that is the set of step-by-step
motions of the differential equation~\eqref{str-dif}, generated by $V$.
The symbol $X(x_0,V,\Delta)$ denotes this set of step-by-step
motions.

Due to stepwise constructions we are swell the target set.
In the following, we denote 
$$M_i^\varepsilon=\{x+y:x\in \mathcal{M}_i, \|y\|\leq \varepsilon\}, \ i=1,2.$$

Let
\begin{equation}\label{time-e}
	\tau_\varepsilon(x(\cdot))=\min\{t\geq 0:x(t)\in M_1^\varepsilon \cup M_2^\varepsilon\}.
\end{equation}
If $x(t)\not\in M_1^\varepsilon\cup M_2^\varepsilon$ for all $t\geq 0$, then we put $\tau_\varepsilon(x(\cdot))=+\infty$.

We extend the function $\sigma$ defined on $\mathcal{M}_1\cup\mathcal{M}_2$ to the set $G$ such that the extended function $\hat{\sigma}$ satisfies the following conditions: 
\begin{itemize}
    \item $\hat{\sigma}(x) |_{\mathcal{M}_1}= \sigma(x)$,
    \item $\hat{\sigma}:G\setminus M_2^\varepsilon \rightarrow [0,\Sigma)$ is Lipschitz continuous,
    \item $\hat{\sigma}=+\infty,$ as $x\in M_2^\varepsilon$.   
\end{itemize}     

\begin{remark}
    Notice, that the function defined by the rule$$
\hat{\sigma}(x)=\max[ \sup\{ \sigma(y)-L\|x-y\|: y\in \mathcal{M}_1\},0] , x\in G\setminus M_2^\varepsilon.
$$ meets aforementioned conditions. \end{remark} 
\begin{proof}
    Indeed,
since $\mathcal{M}_1$ is a closed set and the function $\sigma$ is bounded, we have that the function $\hat{\sigma}$ is well defined. So, we can apply McShane---Whitney theorem for extension of $\sigma$~\cite{Petrakis}. According to this theorem, the function $\hat{\sigma}$  is Lipschitz continuous on the set $G\setminus M_2^\varepsilon$ and $\hat{\sigma}|_{\mathcal{M}_1}=\sigma$.
\end{proof}

Let us introduce the following functionals.
$$ J_1^\varepsilon(x_0,U,\Delta)=\sup\Biggl\{\int\limits_{0}^{\tau_\varepsilon(x(\cdot))} g(x(t))dt+ \hat{\sigma}(x(\tau_\varepsilon(x(\cdot)))):x(\cdot)\in X(x_0,U,\Delta)\Biggr\},$$
$$J_1^\varepsilon(x_0,U)=\limsup\limits_{\mbox{diam}(\Delta)\rightarrow 0} J_1^\varepsilon(x_0,U,\Delta),$$
$$J_1^\varepsilon(x_0)=\inf\limits_{U}J_1^\varepsilon(x_0,U).$$ Finally, let
$$J_1^0(x_0)=\limsup\limits_{\varepsilon\rightarrow 0 }J_1^\varepsilon(x_0)\in[0,+\infty].$$
The quantity $J_1^0(x_0)$ is the guarantee of the first player in the feedback strategies.

Similarly, we define the  guarantee for the second player.
$$ J_2^\varepsilon(x_0,V,\Delta)=\inf\Biggl\{\int\limits_{0}^{\tau_\varepsilon(x(\cdot))} g(x(t))dt+ \hat{\sigma}(x(\tau_\varepsilon(x(\cdot)))):x(\cdot)\in X(x_0,V,\Delta)\Biggr\},$$
$$J_2^\varepsilon(x_0,V)=\liminf\limits_{\mbox{diam}(\Delta)\rightarrow 0} J_2^\varepsilon(x_0,V,\Delta),$$
$$J_2^\varepsilon(x_0)=\sup\limits_{V}J_2^\varepsilon(x_0,V),$$
$$J_2^0(x_0)=\liminf\limits_{\varepsilon\rightarrow 0 }J_2^\varepsilon(x_0)\in[0,+\infty].$$

For arbitrary control processes   $(U,\Delta^1)$ and $(V, \Delta^2)$, the following inequalities are valid~\cite{Subbotin}:
$$J_2^\varepsilon(x_0,V,\Delta^2)\leq J_1^\varepsilon(x_0,U,\Delta^1), \quad J_2^0(x_0)\leq J_1^0(x_0).$$

If $J_2^0(x_0)= J_1^0(x_0)$, then we say that the value $\operatorname{Val}$ exists in differential game and
$\operatorname{Val}(x_0)=J_2^0(x_0)= J_1^0(x_0)$.

\section{Minimax solution for the Hamilton---Jacobi equation}\label{sec4}
In this section  for the description of the value function, we will use the notion of the generalized solution for the Hamilton---Jacobi equation. 
Let us
define the Hamiltonian $$ \mathcal{H}(x,s)=\Bigl[\min\limits_{p\in P}\max\limits_{q\in Q}\left\langle s,f(x,p,q)\right\rangle+g(x)\Bigr].$$

We consider the Dirichlet problem for the Hamilton---Jacobi equation
\begin{equation} \label{h1}
	\mathcal{H}(x,\nabla\varphi(x))=0, x\in G; \ \varphi(x)=\sigma(x),  \ x\in \partial G.
\end{equation}
Here, the function $\varphi$ can take infinity values. We apply the Kruzhkov's transform to function $\varphi(\cdot)$:
$$ u(x)=1-e^{-\varphi(x)}.$$
Since $\varphi$  takes values in $[0,+\infty]$, we obtain that
 $u(x)\in[0,1]$ $\forall x\in G$. We denote by the symbol $\tilde{\sigma}$ the Kruzhkov's transformation of function $\hat{\sigma}:$
$$\tilde{\sigma}(x)=1-e^{-\hat{\sigma}(x)}. $$ To write down equation on $u$, we put
 $$H(x,s,z)=\min\limits_{p\in P}\max\limits_{q\in Q}\left\langle s,f(x,p,q)\right\rangle+g(x)(1-z).$$
Notice, that the function $\varphi$ satisfies ~\eqref{h1} iff the corresponding function $u$ satisfies the Dirichlet problem for the Hamilton---Jacobi equation
\begin{equation} \label{h2}
	H(x,\nabla u(x),u(x))=	0, x\in G; u(x)=\tilde{\sigma}(x), \ x\in \partial G.
\end{equation}

Let us divide the  Hamilton---Jacobi equation~\eqref{h2}  by the coefficient $g(x)$. Recall that $g(x)>0$.Thus, we arrive to the problem:
\begin{equation}\label{h3}
	\min\limits_{p\in P}\max\limits_{q\in Q}\left\langle D_xu(x),f(x,p,q)\right\rangle+1-u(x)=0,  u(x)=\tilde{\sigma}(x), \ x\in G.
\end{equation}

A solution of problem~\eqref{h3} is understood  in the minimax sense. It is equivalent to the notion of the viscosity solution~\cite{Subbotin}. According to Subbotin's approach let us recall the definition of the generalized (minimax) solution of problem~\eqref{h3} and the definition of upper and lower directional derivatives at the point $x$ in the direction $f$~\cite{Subbotin}: $$d^+u(x;f)=\lim\limits_{\varepsilon\rightarrow0} \Bigl\{\sup \frac{u(x+\delta f')-u(x)}{\delta}: \ (\delta,f')\in B_\varepsilon(x,f)\Bigr\}; $$
$$d^-u(x;f)=\lim\limits_{\varepsilon\rightarrow0} \Bigl\{\inf\frac{u(x+\delta f')-u(x)}{\delta}: \ (\delta,f')\in B_\varepsilon(x,f) \Bigr\}. $$
Here, $B_\varepsilon(x,f)=\{(\delta,f')\in (0,\varepsilon)\times \mathbb{R}^d: \ \|f-f'\|\leq \varepsilon, x+\delta f' \in G \}$.
\begin{definition}
	A supersolution  of problem~\eqref{h2} is a lower semicontinuous  function
	$u:\mbox{cl } G\rightarrow \mathbb{R}$ such that
	\begin{enumerate}
		\item  $u(x)=\tilde{\sigma}(x), \ x\in \partial G$, for some constant $c>0$ $\sup\limits_{x\in \mbox{cl }G}|u(x)|\leq c$;
	\item $\inf\{d^-u(x;\bar{f})-\bar{g}: (\bar{f},\bar{g})\in E^+(x,u(x),q)\}\leq 0$, for each
  $ x\in G$, $q\in Q$. Here
  		\begin{equation}\label{d-incl}
			E^+(x,u(x),q)=\mbox{co }\{ (f(x,p,q),\bar{g})\in \mathbb{R}^d \times \mathbb{R}: p\in P, \bar{g}=g(x)(z-1)\},
					\end{equation}
		\end{enumerate}
\end{definition}

\begin{definition}
	A subsolution of problem~\eqref{h2} is an upper semicontinuous function
	$u:\mbox{cl } G\rightarrow \mathbb{R}$ such that
	\begin{enumerate}
		\item  $u(x)=\tilde{\sigma}(x), \ x\in \partial G$, for some constant $c>0$ $\sup\limits_{x\in \mbox{cl }G}|u(x)|\leq c$;
		\item $u(x)$ is continuous at each point $ x\in \partial G$;
		\item $\sup\{d^+u(x;\bar{f})-\bar{g}: (\bar{f},\bar{g})\in E^-(x,u(x),p)\}\geq 0$, for each
  $ x\in G$, $ p\in P$. Here
   		   \begin{equation}\label{d-incl1}
		   		E^-(x,u(x),p)=\mbox{co }\{ (f(x,p,q),\bar{g})\in \mathbb{R}^d \times \mathbb{R}: q\in Q, \bar{g}=g(x)(z-1)\},
				\end{equation}   
	\end{enumerate}
\end{definition}

\begin{definition}
	A minimax solution of problem~\eqref{h2} is  a function $u:\mbox{cl } G\rightarrow \mathbb{R}$, satisfying the equality at each $x\in \mbox{cl } G$
	$$\lim_{k\rightarrow \infty}u^k(x)=u(x)=\lim_{k\rightarrow \infty}u_k(x) ,$$
	where $\{u^k\}_{k=1}^\infty$ $(\mbox{respectively, }\{u_k\}_{k=1}^\infty)$ is a sequence of  supersolutions (respectively, subsolutions)  of problem~\eqref{h2}.
\end{definition}

\begin{theorem}\cite{Subbotin}
Assume that there exists a subsolution of  problem
\eqref{h3}. Then, there exists a unique minimax solution of  problem~\eqref{h3}. The minimax solution coincides with the minimal supersolution.
\end{theorem}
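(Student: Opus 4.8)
The plan is to combine a comparison principle with Perron's method, the entire argument being driven by the strict monotonicity of the normalized Hamiltonian in its last argument. Writing the left-hand side of~\eqref{h3} as $\mathcal{F}(x,D_xu,u)=\min_{p}\max_{q}\langle D_xu,f(x,p,q)\rangle+1-u$, one has $\partial\mathcal{F}/\partial u\equiv-1<0$; equivalently, before the division the coefficient of $u$ in~\eqref{h2} is $-g(x)\le-b<0$ by \textbf{A4}. This properness is precisely what makes the Dirichlet problem well posed, and it is the structural reason the Kruzhkov transform was introduced, since it confines all relevant values to $[0,1]$.

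First I would establish the comparison principle: every subsolution is dominated by every supersolution on $\mathrm{cl}\,G$. Since both classes satisfy $u=\tilde{\sigma}$ on $\partial G$ by definition, the boundary values already coincide and the comparison is an interior propagation statement. Working intrinsically in Subbotin's formalism, this follows from the weak invariance of the relevant sub-/super-level epigraphs under the characteristic differential inclusions encoded by $E^{\pm}$ in~\eqref{d-incl}--\eqref{d-incl1}, the Isaacs condition~\textbf{A3} ensuring that the interchange of $\min_p$ and $\max_q$ is immaterial. Using the equivalence with the viscosity formulation noted after~\eqref{h3}, the same conclusion is reachable by the doubling-of-variables technique: one maximizes $u_k(x)-u^k(y)-|x-y|^2/(2\eta)$, evaluates the two inequalities at the maximizing pair, and controls the Hamiltonian increment through the Lipschitz bound~\textbf{A2} on $f$, whereupon the monotonicity term $-(u_k-u^k)$ forces $u_k\le u^k$. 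Because $G$ is unbounded and $f$ obeys only the sublinear estimate~\textbf{A1}, a localization is needed at infinity, but the confinement of all values to $[0,1]$ after the Kruzhkov transform makes the penalization harmless.

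Next I would run Perron's method on the family $\mathcal{S}$ of supersolutions of~\eqref{h2} that dominate the given subsolution $\underline{u}$. The family is nonempty: the function equal to $1$ on $G$ and to $\tilde{\sigma}$ on $\partial G$ is lower semicontinuous (as $\tilde{\sigma}\le1$) and satisfies the supersolution inequality trivially, because its directional derivatives vanish while the corresponding $\bar g=g(x)(u(x)-1)=0$. I then set
$$\bar{u}(x)=\inf\{w(x):w\in\mathcal{S}\}.$$
By comparison $\bar{u}\ge\underline{u}$, so $\bar u$ is finite, lies in $[0,1]$, and inherits the correct boundary behaviour; here the existence of a subsolution is exactly the hypothesis that supplies this lower barrier. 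The stability of supersolutions under pointwise infima, via the lower relaxed limit, shows that $\bar u$ is again a supersolution, and by construction the minimal one. The decisive step is to verify that $\bar u$ is also a subsolution: if the subsolution inequality failed at an interior point $x_0$, one could lower $\bar u$ on a small neighbourhood of $x_0$ by a compactly supported smooth bump, preserving both the supersolution property and the boundary datum, thereby producing an element of $\mathcal{S}$ strictly below $\bar u$ at $x_0$ and contradicting minimality.

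Finally, I would identify $\bar u$ with the minimax solution and deduce uniqueness. Being simultaneously a sub- and a supersolution, $\bar u$ is realized as a minimax solution by the constant sequences $u^k\equiv\bar u$ and $u_k\equiv\bar u$. Conversely, any minimax solution $u$ is in particular both a sub- and a supersolution, so the comparison principle applied in both directions yields $u\le\bar u$ and $\bar u\le u$, hence $u=\bar u$, which also pins $\bar u$ to the minimal supersolution. I expect the main obstacle to be the comparison principle near $\partial G$: the datum $\tilde{\sigma}$ equals $1$ on $\mathcal{M}_2$, the Kruzhkov image of $\sigma=+\infty$ on the lifeline, so the argument must accommodate the continuity-at-the-boundary clause of the subsolution definition and the merely Lipschitz extension $\hat{\sigma}$, and one must check that the downward bump in Perron's step never disturbs the prescribed values on $\mathcal{M}_1\cup\mathcal{M}_2$.
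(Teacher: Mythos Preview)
The paper does not prove this theorem at all: it is stated with a citation to Subbotin's monograph and used as a black box. So there is no ``paper's own proof'' to compare against; your sketch is being measured against the standard argument in~\cite{Subbotin}.

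Your outline---comparison principle plus Perron's method on the family of supersolutions, with the hypothesised subsolution serving as a lower barrier---is indeed the skeleton of the argument in~\cite{Subbotin}, and your identification of the properness coming from the $-u$ term (equivalently $-g(x)u$ with $g\ge b>0$ before normalization) as the engine of comparison is correct. One point deserves care: in your last paragraph you write that ``any minimax solution $u$ is in particular both a sub- and a supersolution''. With the definition adopted in this paper that is not immediate: a minimax solution is only required to be a \emph{pointwise limit} of a sequence of supersolutions and simultaneously of a sequence of subsolutions, not itself a member of either class. Uniqueness is still obtained, but directly from comparison applied to the approximating sequences: if $u=\lim u_k=\lim u^k$ and $v=\lim v_k=\lim v^k$ are two minimax solutions, then $u_k\le v^j$ for all $k,j$ gives $u\le v$, and symmetrically $v\le u$. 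Your identification of $\bar u$ with a minimax solution via constant sequences also requires knowing that $\bar u$ is both a sub- and a supersolution, which you do argue; but the converse direction should be phrased through the approximants rather than through an unproved closure property.
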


Thus,  the existence result  for problem~\eqref{h2} is reduced to the   existence of the subsolution  to problem~\eqref{h3}.
Further, we construct the proper subsolution for problem~\eqref{h2}. 

Let us consider the function \begin{equation} \label{subsol}
v(x)=\begin{cases}
1, \ x \in M^\varepsilon_2,\\
\tilde{\sigma}(x), \ x\in G \setminus M^\varepsilon_2.
\end{cases}
\end{equation}

We introduce the following condition  for the function $v$ defined by~\eqref{subsol}:
  \begin{equation} \label{cond-low} \forall x_0 \in  G \ 
 \forall \ p\in P  \ \exists \  q_0\in Q : \ b(1-v(x_0))\geq-d^+v(x_0;f(x_0,p,q_0)),  
 \end{equation} 
\begin{proposition}  Assume that  condition~\eqref{cond-low} is true. Then
   the function $v$ of form~\eqref{subsol} is a subsolution of  Dirichlet problem~\eqref{h3}. 
\end{proposition}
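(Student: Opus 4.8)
The plan is to verify, in turn, the three defining requirements of a subsolution of~\eqref{h2} (equivalently~\eqref{h3}) for the function $v$ of form~\eqref{subsol}, reserving the differential inequality for last since it carries the essential content. The structural and boundary requirements are routine; the work lies in extracting the Hamilton--Jacobi subsolution inequality from the assumed condition~\eqref{cond-low}.

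First I would dispose of the structural conditions. The function $v$ takes values in $[0,1]$: on $M_2^\varepsilon$ one has $v=1$, while on $G\setminus M_2^\varepsilon$ one has $v=\tilde\sigma=1-e^{-\hat\sigma}\in[0,1)$ because $\hat\sigma\ge 0$; hence $v$ is bounded and, crucially, $1-v(x)\ge 0$ everywhere. Upper semicontinuity follows by noting that $\tilde\sigma$ is continuous on $G\setminus M_2^\varepsilon$ (it is the Kruzhkov transform of the Lipschitz function $\hat\sigma$) and that $v$ jumps \emph{upward} to the value $1$ on the closed set $M_2^\varepsilon$, so across the interface the relation $\limsup_{y\to x}v(y)\le v(x)$ holds. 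For the boundary data I would use that $M_1$ and $M_2$ lie at positive distance, so for small $\varepsilon$ one has $\mathcal{M}_1\cap M_2^\varepsilon=\varnothing$; thus $v=\tilde\sigma$ on $\mathcal{M}_1$, while on $\mathcal{M}_2\subset M_2^\varepsilon$ both $v$ and $\tilde\sigma=1-e^{-\infty}=1$ equal $1$, giving $v=\tilde\sigma$ on all of $\partial G$. Continuity at the boundary is immediate, since $v$ coincides with the continuous $\tilde\sigma$ near $\mathcal{M}_1$ and is identically $1$ on a full neighbourhood of $\mathcal{M}_2$ inside $M_2^\varepsilon$.

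The main step is the differential inequality: for every $x\in G$ and $p\in P$ one must exhibit a pair in $E^-(x,v(x),p)$ at which $d^+v(x;\bar f)-\bar g\ge 0$. Given $x$ and $p$, I would invoke hypothesis~\eqref{cond-low} to produce $q_0\in Q$ with $b(1-v(x))\ge-d^+v(x;f(x,p,q_0))$, and then select the generating pair $(\bar f,\bar g)=(f(x,p,q_0),\,g(x)(v(x)-1))\in E^-(x,v(x),p)$. Then
$$d^+v(x;\bar f)-\bar g=d^+v(x;f(x,p,q_0))+g(x)(1-v(x)),$$
and since $d^+v(x;f(x,p,q_0))\ge -b(1-v(x))$ by~\eqref{cond-low}, the right-hand side is at least $(g(x)-b)(1-v(x))\ge 0$, because $g(x)\ge b$ by assumption A4 and $1-v(x)\ge 0$ by the first step. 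Hence the supremum defining the subsolution inequality is nonnegative, as required.

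The only delicate point is the algebraic bridge just used: condition~\eqref{cond-low} is stated with the lower bound $b$ rather than with the genuine running cost $g(x)$ appearing in the Hamiltonian, and the argument works precisely because the sign $1-v(x)\ge 0$ allows one to replace $b$ by the larger $g(x)$ without reversing the inequality. I would therefore record at the outset that $v\le 1$ everywhere, since it is this fact — together with A4 — that legitimizes the passage from the hypothesis to the Hamilton--Jacobi subsolution inequality. No separate difficulty arises at points of $M_2^\varepsilon$, where $1-v(x)=0$ reduces~\eqref{cond-low} to $d^+v(x;f(x,p,q_0))\ge 0$, which is exactly what the differential inequality demands there.
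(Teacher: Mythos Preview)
Your proof is correct and follows essentially the same approach as the paper's: invoke condition~\eqref{cond-low} to produce $q_0$, take the corresponding extremal pair $(\bar f,\bar g)=(f(x,p,q_0),g(x)(v(x)-1))\in E^-(x,v(x),p)$, and combine $d^+v(x;\bar f)\ge -b(1-v(x))$ with $g(x)\ge b$ and $1-v(x)\ge 0$ to get the required nonnegativity. The only presentational difference is that the paper verifies the differential inequality via an explicit three-way case split (points in $G\setminus M_2^\varepsilon$, interior points of $M_2^\varepsilon$, boundary points of $M_2^\varepsilon$), whereas your argument handles all points uniformly; both are valid, and your streamlined version is arguably cleaner since the case split adds no new idea.
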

\begin{proof}
    The function $v$ is upper semicontinuous in $G$, continuous at each point of $ x\in \partial G$ and it is bounded due to the definition.
    Let us check the inequality for each $x_0 \in  G,
 \ p\in P $:
     $$\sup\{ d^+v(x_0;\bar{f})-\bar{g}:( \bar{f},\bar{g})\in E^-(x,v(x_0),p)\}\geq 0.$$
     \begin{itemize}
         \item  If $x_0\in G\setminus M_2^\varepsilon$, then we choose $\bar{f}$ from condition~\eqref{cond-low} and $\bar{g}=g(x_0)(v(x_0)-1)$. Hence, $$ d^+v(x_0;\bar{f})-\bar{g}\geq d^+v(x_0;\bar{f})+b(1-v(x_0))\geq 0.$$ 
         \item If $x_0$ is the inner point of the set $M_2^\varepsilon$, then $$v(x)\equiv1, d^+v(x_0;\bar{f})=0,  \bar{g}=g(x_0)(v(x_0)-1)=0. $$ So, $d^+v(x_0;\bar{f})-\bar{g}=0.$
         \item  If $x_0\in \partial M_2^\varepsilon$ and $v(x_0)=1$,
   then from~\eqref{cond-low}, we choose $\bar{f}$. 
   \subitem  If $x_0+\delta \bar{f} \in   M_2^\varepsilon$, then $d^+v(x_0;\bar{f})=0$, $\bar{g}=g(x_0)(v(x_0)-1)=0$. 
   \subitem If $x_0+\delta \bar{f} \in   G\setminus M_2^\varepsilon$, then  $d^+v(x_0;\bar{f})=-\infty$ and it contradicts to~\eqref{cond-low}.  Thus,
   $d^+v(x_0;\bar{f}) -\bar{g}=0$ and 
 $$\sup\{ d^+v(x;\bar{f})-\bar{g}:( \bar{f},\bar{g})\in E^-(x,v(x),p)\}= 0, \ \forall \ x\in M_2^\varepsilon, p\in P.$$
     \end{itemize}
   
    \end{proof}
    
Condition~\eqref{cond-low} provides the existence of a subsolution of Hamilton---Jacobi equation~\eqref{h3} and low semicontinuity of the value function.

   \begin{remark}
Condition ~\eqref{cond-low} always holds in the time optimal problem.\end{remark}
\begin{proof}
    Indeed, for the time optimal problem $g(x)\equiv 1$, $\varsigma(x)\equiv0$, for $x\in M_1$ and $M_2$ is empty.  Notice, that $\tilde{\sigma}(x)\equiv 0$,  then $dv(x; f(x,p,q))=d \tilde{\sigma}(x;f(x,p,q))\equiv0$, for any $p\in P$, $q\in Q$ and $x\in G$. So, we have $1-v(x)=1>0=-dv(x; f(x,p,q))$.
\end{proof}

Example. We change a little  an example from~\cite{Subbotin}.
Let us consider the dynamic system with the functional:
$$\dot{x}(t)=1, \quad J(x_0)=\int\limits_0^t ds, x\in[0,1].$$ The target set $M_1=\{0\}$, the lifeline $M_2=\{1\}$.
We introduce the Hamiltonian 
$$H(x,s,z)=s+1-z.$$ Then the Dirichlet problem has the form:
$$\frac{du}{dx}+1-u(x)=0, \ \tilde{\sigma}(0)=0, \ \tilde{\sigma}(1)=1, 0<x<1.$$

The subsolution is $$v_\epsilon(x)=\begin{cases}
1, \ x \in [1-\epsilon,1],\\
0, \ [0,1-\epsilon),
\end{cases}$$
$\epsilon\in (0,1)$.
We show that $v_\epsilon(\cdot)$ is a subsolution of the Dirichlet problem.
Notice, that the function $v$ is upper semicontinuous, it is bounded and continuous at the points $x=0$ and $x=1$. 
 Further, if $x_0\in [0,1-\epsilon)$, then $ \bar{f}=1$, $\bar{g}=-1$, $d^+v_\epsilon(x_0;1)=0<b=1$. So,
 $$\sup\{ d^+v_\epsilon(x_0;1)-\bar{g}\}=1\geq 0. $$
 If $x_0\in (1-\epsilon,1]$, then $ \bar{f}=1$, $\bar{g}=0$,  $d^+v_\epsilon(x_0;1)=0$,
  and $\sup\{ d^+v_\epsilon(x_0;1)-\bar{g}\}=0\geq 0.$
 If $x_0=1-\varepsilon$, then $ \bar{f}=1$, $\bar{g}=0$, $d^+v_\epsilon(x_0;1)=0$, and $\sup\{ d^+v_\epsilon(x_0;1)-\bar{g}\}= 0.$ Condition~\eqref{cond-low} is fulfilled in this problem.
 
 Hence, there exists the minimax solution $u(\cdot)$ in this problem~\cite{Subbotin}.
The minimax solution has the form due to the direct calculations $$u(x)=\begin{cases}
1, \ x \in (0,1],\\
0, \ x=0.
\end{cases}$$
 It is low semicontinuous function, coincides with a supersolution of the Dirichlet problem, satisfies the Hamilton---Jacobi equation and the boundary condition.

\section{Construction of $\varepsilon$-optimality strategies}\label{sec5}
In this section, we assume that we are given with a unique solution of the Dirichlet problem  to the Hamilton---Jacobi equation. We aim to construct the players suboptimal strategies based on this function. 

Let $u:\mathbb{R}^d\rightarrow [0,1]$ be a unique minimax solution of problem~\eqref{h2}.
Let us transform the function $u$
\begin{equation}\label{u-a}
	u_\alpha(x)=\min_{y\in \mathbb{R}^d}[ u(y)+w_\alpha(x,y)],
\end{equation}
where
\begin{equation}\label{w}
	w_\alpha(x,y)=\frac{(\alpha^{\frac{2}{\nu}} +\|x-y\|^2)^\nu}{\alpha}, \ \nu=\frac{1}{2+2\lambda}
\end{equation}
Here, $\lambda$ is the Lipschitz constant from assumption $A2$.

The function $y\rightarrow u(y)+w_\alpha(x,y)$ is lower semicontinuous,  therefore  minimum in expression~\eqref{u-a} is attained at the point $y_\alpha$  such that $\|x-y_\alpha\|\leq1$. It follows from the work~\cite{Subbotin} that $\|x-y_\alpha\|\leq 2\alpha$.

\begin{proposition}  There exists  $\alpha_0>0$ such that for all $\alpha \in (0,\alpha_0]$ the inequality
	\begin{equation} \label{ineq}
	H(x,D_xw_\alpha(x,y),z)-H(y,-D_yw_\alpha(x,y),z)-bw_\alpha(x,y)\leq0
	\end{equation}
	is valid for all $x,y\in \mathbb{R}^d$, $\|x-y\|\leq 1$, $z\in \mathbb{R}$, the function $w_\alpha$ is defined by~\eqref{w}. The constant $b$ is defined in assumption $A4$.
\end{proposition}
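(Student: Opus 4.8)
The plan is to exploit the translation structure of $w_\alpha$. Since $w_\alpha(x,y)$ depends on $x$ and $y$ only through $x-y$, a direct differentiation gives
\[D_xw_\alpha(x,y)=\frac{2\nu(x-y)}{\alpha}\bigl(\alpha^{2/\nu}+\|x-y\|^2\bigr)^{\nu-1}=-D_yw_\alpha(x,y).\]
Consequently both Hamiltonians occurring in~\eqref{ineq} are evaluated at one and the same covector $s:=D_xw_\alpha(x,y)$, and the whole statement reduces to estimating the purely spatial increment $H(x,s,z)-H(y,s,z)$ from above by $b\,w_\alpha(x,y)$. First I would split this increment into the part coming from the inner $\min\max$ term and the part coming from the discount term $g(\cdot)(1-z)$.

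For the $\min\max$ part I would invoke the elementary nonexpansiveness of the operations $\min_{p}\max_{q}$: the difference of the $\min\max$ values of two functions on $P\times Q$ is bounded by the supremum of their pointwise difference. Combined with the Cauchy--Schwarz inequality and the Lipschitz assumption $A2$ this yields $\min_p\max_q\langle s,f(x,p,q)\rangle-\min_p\max_q\langle s,f(y,p,q)\rangle\le\lambda\|s\|\,\|x-y\|$. The assumption $A2$ applied to $g$ controls the discount contribution by $\lambda\|x-y\|\,|1-z|$; because the covector $s$ is common to both Hamiltonians, these discount terms enter only through $(g(x)-g(y))(1-z)$, and in the range of values relevant to the application ($z=u(\cdot)\in[0,1]$) the factor $|1-z|$ is bounded.

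The heart of the argument is the scaling estimate linking $\|s\|$ back to $w_\alpha$ itself, and here the specific exponent $\nu=1/(2+2\lambda)$ is decisive. I would compute
\[\|s\|\,\|x-y\|=2\nu\,\frac{\|x-y\|^2}{\alpha^{2/\nu}+\|x-y\|^2}\,w_\alpha(x,y)\le 2\nu\,w_\alpha(x,y),\]
so the $\min\max$ part is bounded by $2\nu\lambda\,w_\alpha=\tfrac{\lambda}{1+\lambda}\,w_\alpha$, a fixed fraction of $w_\alpha$ that is \emph{uniform} in $\alpha$. For the remaining lower-order contributions I would use that on the set $\|x-y\|\le1$ one has $\alpha\,w_\alpha\ge\|x-y\|^{2\nu}\ge\|x-y\|$ (because $2\nu=1/(1+\lambda)\le1$), so that every term proportional to $\|x-y\|$ secretly carries a factor $\alpha$ and can be made as small as desired relative to $w_\alpha$. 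Collecting the two bounds and then choosing $\alpha_0$ small enough that the $\alpha$-dependent remainder is absorbed produces the asserted inequality, the calibration of $\nu$ being exactly what keeps the dominant gradient-growth coefficient under control.

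The step I expect to be the main obstacle is precisely this scaling estimate: one must verify that the bound $\|s\|\,\|x-y\|\le 2\nu\,w_\alpha$ is valid \emph{uniformly} over all $\|x-y\|\le1$ and all $\alpha\in(0,\alpha_0]$, and that the exponent $\nu$ is calibrated so the leading coefficient respects the threshold set by the constant $b$ of $A4$. The secondary delicate point is the careful bookkeeping of the discount term $g(\cdot)(1-z)$, i.e. checking that its dependence on $z$ does not spoil the estimate once the common-covector cancellation and the $\alpha$-smallness of the $\|x-y\|$-proportional remainder are taken into account.
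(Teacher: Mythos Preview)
Your route and the paper's diverge at the crucial step. Both of you begin by observing $s:=D_xw_\alpha=-D_yw_\alpha$. The paper then simply asserts $H(x,s,z)-H(y,s,z)\le\lambda\|x-y\|$---i.e.\ it drops the $\|s\|$-factor entirely---and combines this with the crude lower bound $w_\alpha\ge\|x-y\|^{2\nu}/\alpha\ge\|x-y\|/\alpha$ (valid because $2\nu\le1$ and $\|x-y\|\le1$) to get $bw_\alpha-\lambda\|x-y\|\ge(b-\lambda\alpha)\alpha^{-1}\|x-y\|$, whence $\alpha_0=b/\lambda$. Your approach instead keeps honest track of the gradient via the scaling identity $\|s\|\,\|x-y\|\le2\nu\,w_\alpha$ and bounds the $f$-Lipschitz contribution by $\tfrac{\lambda}{1+\lambda}w_\alpha$; this is the standard Subbotin-style estimate that actually explains the choice of $\nu$. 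What you gain is a genuine control of the min--max term; the paper's one-line Lipschitz claim, as written, is not justified (the spatial Lipschitz constant of $H$ carries an $\|s\|$-factor, and $\|s\|$ is \emph{not} bounded uniformly in $\alpha$).

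There is, however, a gap in your absorption step that you should confront explicitly. Your dominant coefficient $\tfrac{\lambda}{1+\lambda}$ carries \emph{no} factor of $\alpha$, so shrinking $\alpha$ cannot push it below $b$; your argument closes only under the additional hypothesis $b>\tfrac{\lambda}{1+\lambda}$, which the paper does not assume. (In the classical time-optimal setting $b=1$ and this is automatic, which is why the exponent $\nu=1/(2+2\lambda)$ suffices there; for a general lower bound $b$ one would expect $\nu$ to be recalibrated in terms of $b$.) You are also right to flag the $z$-dependence: the discount contribution is $(g(x)-g(y))(1-z)$, so the inequality literally ``for all $z\in\mathbb{R}$'' cannot hold unless $g$ is constant. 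The paper's proof ignores this entirely; in the application (Theorem~4) only $z=u_\alpha(\xi)\in[0,1]$ is ever used, so restricting to $|1-z|\le1$ as you do is the correct reading.
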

\begin{proof}
	Let us set $s=D_xw_\alpha(x,y)=-D_yw_\alpha(x,y)$
	and estimate the expression
	\begin{equation*} \begin{split}	
		& bw_\alpha(x,y)-H(x, D_xw_\alpha(x,y),z)+H(y,-D_yw_\alpha(x,y),z)\\
	& =bw_\alpha(x,y)-H(x, s,z)+H(y,s,z)\\
	& \geq bw_\alpha(x,y) -\lambda||x-y||\geq b\frac{\|x-y\|^{2\nu}}{\alpha}-\lambda||x-y||.
\end{split}
	\end{equation*}
	For $\|x-y\|\leq 1$, we obtain that $\|x-y\|^{2\nu}\geq \|x-y\|$.
	Then, $$ b\frac{\|x-y\|^{2\nu}}{\alpha}-\lambda||x-y||\geq \Bigl(\frac{b-\lambda \alpha}{\alpha}\Bigr)\|x-y\|\geq 0.$$
	Finally, let us choose $\alpha_0$  such that $$ b\geq\lambda\alpha_0 .$$
\end{proof}

Now, we define the pre-strategies, realizing the extremal shift rule:
\begin{equation}\label{predstr}
	p_0(x,s)\in \arg\min_{p\in P}\Bigl\{ \max\limits_{q\in Q} \langle s,f(x,p,q)\rangle \Bigr \},
	q_0(x,s) \in \arg\max_{q\in Q}\Bigl\{ \min\limits_{p\in P} \langle s,f(x,p,q)\rangle \Bigr\}.
\end{equation}

The feedback strategy
 $U_\alpha:\mathbb{R}^d\rightarrow P$ is defined by
\begin{equation}\label{str-u}
	U_\alpha(x)=p_0(x,s_\alpha(x)),
\end{equation}
where $p_0$ satisfies~\eqref{predstr}, the vector $s_\alpha(x)=D_xw_\alpha(x,y_\alpha(x))=-D_yw_\alpha(x,y_\alpha(x)),$
$y_\alpha(x) \in \arg\min\limits_{y\in \mathbb{R}^d} [u(y)+w_\alpha(x,y)]$.

\begin{theorem} Let $u$ be a minimax solution of problem~\eqref{h2} and $x_0 \in G$ be such that $u(x_0)<1$.  Then, for every $\varepsilon>0$ and $I>-\ln(1-u(x_0))$,
	there exist $\alpha>0$ and $\delta_0>0$ satisfying:
	$$ J_1^\varepsilon(x_0,U_\alpha,\Delta)\leq I   \mbox{ while diam }\Delta <\delta_0.$$
		Here, $U_\alpha$ is a  strategy satisfying~\eqref{str-u}.
\end{theorem}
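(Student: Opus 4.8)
The plan is to run the Krasovskii--Subbotin extremal shift argument, using the inf-convolution $u_\alpha$ from \eqref{u-a} as a Lyapunov function and reading the resulting estimate back through the Kruzhkov transform. Recall that $u=1-e^{-\varphi}$, so the requirement $I>-\ln(1-u(x_0))$ is exactly that $I$ exceed the idealized guaranteed payoff $\varphi(x_0)$; the surplus $I-\varphi(x_0)>0$ must absorb the errors coming from the smoothing parameter $\alpha$, from the enlargement $M_1^\varepsilon$, and from $\operatorname{diam}\Delta$. First I would fix notation along an arbitrary step-by-step motion $x(\cdot)\in X(x_0,U_\alpha,\Delta)$ driven by some $q(\cdot)$: on $[t_i,t_{i+1})$ write $y_i=y_\alpha(x(t_i))$, $s_i=s_\alpha(x(t_i))=D_xw_\alpha(x(t_i),y_i)=-D_yw_\alpha(x(t_i),y_i)$, and $p_i=U_\alpha(x(t_i))=p_0(x(t_i),s_i)$. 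Since $u_\alpha\le u$ and $u(x_0)<1$, we have $u_\alpha(x_0)<1$ with $u_\alpha(x_0)\to u(x_0)$ as $\alpha\to0$, so $\varphi_\alpha(x_0):=-\ln(1-u_\alpha(x_0))\to\varphi(x_0)<I$.

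The core is a one-step estimate for $u_\alpha(x(\cdot))$. Because $y_i$ is an admissible competitor in the minimum \eqref{u-a} at $x(t_{i+1})$,
\begin{equation*}
u_\alpha(x(t_{i+1}))-u_\alpha(x(t_i))\le w_\alpha(x(t_{i+1}),y_i)-w_\alpha(x(t_i),y_i)=\int_{t_i}^{t_{i+1}}\langle D_xw_\alpha(x(t),y_i),f(x(t),p_i,q(t))\rangle\,dt.
\end{equation*}
Replacing $D_xw_\alpha(x(t),y_i)$ by $s_i$ and $f(x(t),\cdot)$ by $f(x(t_i),\cdot)$ costs $O(\operatorname{diam}\Delta)$ per unit time by $A2$ and the smoothness of $w_\alpha$. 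The extremal-shift choice of $p_i$ then gives, for every $q$, $\langle s_i,f(x(t_i),p_i,q)\rangle\le\min_{p}\max_{q}\langle s_i,f(x(t_i),p,q)\rangle=H(x(t_i),s_i,z)-g(x(t_i))(1-z)$ for any $z$. Taking $z=u(y_i)$, I would apply the estimate \eqref{ineq} to pass from $x(t_i)$ to $y_i$, $H(x(t_i),s_i,u(y_i))\le H(y_i,s_i,u(y_i))+bw_\alpha(x(t_i),y_i)$, and then use that $s_i=-D_yw_\alpha(x(t_i),y_i)$ is a subgradient of $u$ at $y_i$ (first-order condition for the minimizer $y_i$), so the supersolution property of the minimax solution yields $H(y_i,s_i,u(y_i))\le0$. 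Substituting $1-u(y_i)=1-u_\alpha(x(t_i))+w_\alpha(x(t_i),y_i)$ and using $g\ge b$ so that $(b-g)w_\alpha\le0$, these estimates combine into
\begin{equation*}
u_\alpha(x(t_{i+1}))-u_\alpha(x(t_i))\le-\int_{t_i}^{t_{i+1}}g(x(t))\big(1-u_\alpha(x(t_i))\big)\,dt+C\,(\operatorname{diam}\Delta)(t_{i+1}-t_i).
\end{equation*}
Summing over $i$ and tracking the error, this is the differential inequality $\tfrac{d}{dt}(1-u_\alpha(x(t)))\ge g(x(t))(1-u_\alpha(x(t)))-C\operatorname{diam}\Delta$.

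Finally I would integrate. Since $g\ge b>0$ and $1-u_\alpha\le1$, the inequality forces $1-u_\alpha(x(t))$ to grow like $e^{bt}$ up to the small error, so the motion exits $G$ within a bounded time $T=T(\alpha)$; moreover $1-u_\alpha$ stays bounded below by a strictly positive growing quantity, whereas $1-u_\alpha\to0$ near $M_2^\varepsilon$ (there $u\approx1$), so the exit occurs on $M_1^\varepsilon$ and not on the lifeline. Dividing by the (positive, bounded-below) factor $1-u_\alpha$ rewrites the inequality as $\tfrac{d}{dt}\big(\varphi_\alpha(x(t))+\int_0^t g\big)\le C'\operatorname{diam}\Delta$; integrating to $\tau_\varepsilon=\tau_\varepsilon(x(\cdot))$ gives
\begin{equation*}
\int_0^{\tau_\varepsilon}g(x(t))\,dt+\hat{\sigma}(x(\tau_\varepsilon))\le\varphi_\alpha(x_0)+\big(\hat{\sigma}(x(\tau_\varepsilon))-\varphi_\alpha(x(\tau_\varepsilon))\big)+C'T\operatorname{diam}\Delta.
\end{equation*}
Here the subsolution $v$ of \eqref{subsol} satisfies $v\le u$ by comparison for minimax solutions, i.e. $\tilde{\sigma}\le u$ and hence $\hat{\sigma}\le\varphi$ on $G\setminus M_2^\varepsilon$; combined with $\varphi_\alpha\to\varphi$ uniformly on the compact region swept by the motions, the middle term is $o_\alpha(1)$. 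Choosing $\alpha$ so small that $\varphi_\alpha(x_0)+o_\alpha(1)<I$ and then $\delta_0$ so small that $C'T\delta_0$ is below the remaining margin gives $J_1^\varepsilon(x_0,U_\alpha,\Delta)\le I$ uniformly over $x(\cdot)$ and $q(\cdot)$, as claimed.

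I expect the main obstacle to be the per-step estimate, specifically the justification that $s_i$ is a subgradient of $u$ at $y_i$ so that the supersolution inequality $H(y_i,s_i,u(y_i))\le0$ applies, and the uniform bookkeeping of the $O(\operatorname{diam}\Delta)$ errors (whose constants depend on $\alpha$ through the bounds on $D^2w_\alpha$) so that they can be sent to zero after $\alpha$ has been fixed. Ruling out an exit through $M_2^\varepsilon$, and keeping $1-u_\alpha$ bounded away from $0$ along the motion, relies crucially on the strict positivity $g\ge b>0$ and on the hypothesis $u(x_0)<1$.
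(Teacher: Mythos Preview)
Your outline is correct and lands on the same discrete differential inequality for $1-u_\alpha(x(\cdot))$ and the same three-case exit analysis, but the engine driving the one-step estimate is genuinely different from the paper's. You freeze the inf-convolution minimizer $y_i$, bound $u_\alpha(x(t_{i+1}))-u_\alpha(x(t_i))$ by the increment of $w_\alpha(\cdot,y_i)$ alone, and close the loop by recognizing $s_i=-D_yw_\alpha(x(t_i),y_i)$ as a proximal subgradient of $u$ at $y_i$, then invoking the viscosity supersolution inequality $H(y_i,s_i,u(y_i))\le0$ together with \eqref{ineq}. The paper instead runs Subbotin's two-trajectory extremal-shift scheme: from $\eta=y_\alpha(\xi)$ it launches a \emph{comparison motion} $y(\cdot)$ driven by the dual pre-strategy $q_*=q_0(\eta,s_*)$, and uses the \emph{viability of the epigraph} of $u$ (inequality \eqref{inv2}) rather than any pointwise subdifferential test; the algebra then proceeds by Taylor-expanding both $e^{-\int g}$ and $w_\alpha$ and cancelling the first-order terms via the definitions of $p_0,q_0$ and estimate \eqref{ineq}. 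Your route is shorter and matches the standard viscosity-solution style; the paper's stays entirely inside the directional-derivative framework of Definitions~1--3 and never needs to translate the supersolution property into subdifferential language.

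Two cautions on your write-up. First, before applying the supersolution inequality at $y_i$ you must ensure $y_i\in G$; this is exactly what the paper's parameter choice $3\alpha<\varepsilon$ together with $\operatorname{dist}(x(t_i),\partial G)>\varepsilon$ and $\|x(t_i)-y_i\|\le2\alpha$ buys, and you should make it explicit. Second, your control of the boundary term $\hat\sigma(x(\tau_\varepsilon))-\varphi_\alpha(x(\tau_\varepsilon))$ and your exclusion of exit through $M_2^\varepsilon$ both rest on the comparison $u\ge v=\tilde\sigma$ on $G\setminus M_2^\varepsilon$ (and $u=1$ on $M_2^\varepsilon$); this is the same ingredient the paper invokes through Proposition~2, and you should note that it presupposes $v$ is a subsolution, i.e.\ condition~\eqref{cond-low}. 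Finally, the phrase ``$\varphi_\alpha\to\varphi$ uniformly'' is stronger than what holds (the minimax solution is only lower semicontinuous); what you actually use, and what suffices, is $u_\alpha(x)\ge u(y_\alpha)\ge\tilde\sigma(y_\alpha)\ge\tilde\sigma(x)-\mu(2\alpha)$, which follows from $u\ge\tilde\sigma$ and the Lipschitz modulus of $\tilde\sigma$.
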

\begin{proof}
Let us denote  the set of solutions of the differential inclusion
	$$\dot{x}(t)=\mbox{co } \{ f(x(t),p,q): p\in P, q\in Q\}, \quad x(0)=x_0$$
by  $X(x_0)$.
	We choose arbitrary $\theta>I/b$.
	Let us define the set \begin{equation} \label{setk} 
	 K=\{x(t)\in \mathbb{R}^d: x(\cdot)\in X(x_0), t\in[0,\theta]\}.   
	\end{equation}
	Additionally, we denote 
 \begin{equation} \label{kapa}  \kappa=\max_{x\in K}g(x), \ m=\sup\{\|f(x+h,p,q)\|, x\in K, p\in P,q\in Q, \|h\|\leq1\}.\end{equation}
	Notice, that the set $K$ is bounded and $m<\infty$. Further, let us  choose   $\alpha$ and $\delta_0$ such that
	\begin{equation} \label{l1}
	 3\alpha\leq\varepsilon, \ \delta_0m\leq \alpha, \ 3\alpha<1.   
	\end{equation}

	We choose a partition $\Delta$ and consider a step-by-step motion $x(\cdot) \in X(x_0,U_\alpha, \Delta)$.  Further, we suppose that $t_i\in \Delta$, $t_i<\theta$ and $\mbox{dist}(x(t_i);\partial G)>\varepsilon$. Then, we will prove that for all $r\in [t_i,t_{i+1}]\cap [0,\theta]$
	\begin{equation} \label{main}
		1-u_\alpha(x(t_i))\leq e^{-\int_{t_i}^r g(x(t))dt}(1-u_\alpha(x(r)))+h(\delta_0,x_0,\theta)(r-t_i),
	\end{equation}
	where $\lim\limits_{\delta_0\rightarrow 0} h(\delta_0,x_0,\theta)=0$. The function $h(\delta_0,x_0,\theta)$ depends
only on $\delta_0,x_0,\theta$ and does not depend on $x(\cdot) \in X(x_0,U_\alpha, \Delta)$.
		Additionally, we  assume that
	\begin{equation}\label{pars}
	    \alpha+\theta h(\delta_0,x_0,\theta)<\Bigl[e^{b\theta+\ln(1-u(x_0))}-1\Bigr]e^{-\kappa\theta},
	\end{equation}
	the constant $b$ is defined by $A4$. From the definition of $\theta$, the quantity $$e^{b\theta+\ln(1-u(x_0))}-1>0.$$ Hence, the quantities $\alpha$ and $\delta_0$ are well defined.
	
	Now,  we prove inequality~\eqref{main}.
	Let us denote
	$$\xi=x(t_i), \eta=y_\alpha(\xi),s_*=s_\alpha(\xi), p_*=U_\alpha(\xi)=p_0(\xi,s_*),q_*=q_0(\eta,s_*).$$
	Since the functions $f,g$ and feedback strategy $U_\alpha$ do not depend on time, one can consider the case when $t_i=0$.
	Let us put $f^*=\frac{1}{r}\int\limits_{0}^r \dot{x}(t)dt$,
	where $$\dot{x}(t)\in \mbox{co }\{f(x(t),p_*,q):q\in Q\}, \quad x(0)=\xi.$$
	Thus, to prove~\eqref{main}, it sufficient to show that
 $$1-u_\alpha(\xi)\leq e^{-\int_0^r g(x(t))dt} (1-u(\xi+f^*r))+ r h(r,x_0,\theta).$$
	It  follows from the inequalities $ \mbox{dist}(\xi;\partial G)>\varepsilon$, $3\alpha<\varepsilon$ and $\|\xi-\eta\|\leq 2\alpha$  that
	$\mbox{dist}(\eta;\partial G)>\alpha$. From  this and the inequality $\delta_0m\leq \alpha$, we conclude that $y(t)\not\in \partial G$ for every solution of differential inclusion~\eqref{d-incl}  and for all $t\in[0,r]$.

	The viability of epi $u$ w.r.t. differential inclusion~\eqref{d-incl} implies
	\begin{equation} \label{inv2}
		1-u(\eta)\leq e^{-\int_0^r g(y(t))dt} (1-u(\eta+f_*r)).
	\end{equation}
	Here, $f_*=\frac{1}{r}\int\limits_{0}^r \dot{y}(t)dt$,
	while $\dot{y}(t)\in \mbox{co} \{f(y(t),p,q_*):p\in P\}$, for all $t\in [0,r]$, $y(0)=\eta$.
	According to  definition~\eqref{u-a}, we have
	$$u_\alpha(\xi)=u(\eta)+w_\alpha(\xi,\eta). $$
	This equality and~\eqref{inv2}  provide
	$$ 1-u_\alpha(\xi)\leq e^{-\int_0^r g(y(t))dt} (1-u(\eta+f_*r))-w_\alpha(\xi,\eta).$$
	
	Adding to and subtracting from the right-hand side of the last inequality
	the quantity $e^{-\int_0^r g(y(t))dt}w_\alpha(\xi+f^*r,\eta+f_*r)$, we obtain
	\begin{equation*} \begin{split}
	&	1-u_\alpha(\xi)\leq e^{-\int_0^r g(y(t))dt} \Bigl(1-u(\eta+f_*r)-w_\alpha(\xi+f^*r,\eta+f_*r)\Bigr)\\
	& -w_\alpha(\xi,\eta)+e^{-\int_0^r g(y(t))dt}w_\alpha(\xi+f^*r,\eta+f_*r).
		\end{split}
	\end{equation*}
	From the definition $u_\alpha$, we obtain that $$ u_\alpha(\xi+f^*r)\leq u(\eta+f_*r)+w_\alpha(\xi+f^*r,\eta+f_*r)). $$
	Thus,\begin{equation}\label{ineq2} \begin{split}
	&	1-u_\alpha(\xi) \leq e^{-\int_0^r g(y(t))dt} (1-u_\alpha(\xi+f^*r))\\
	&	+e^{-\int_0^r g(y(t))dt}w_\alpha(\xi+f^*r,\eta+f_*r)-w_\alpha(\xi,\eta).
		\end{split}
	\end{equation}
	
	Let us denote $\gamma=e^{-\int_0^r g(y(t))dt}w_\alpha(\xi+f^*r,\eta+f_*r)-w_\alpha(\xi,\eta)$.
	Further, we estimate $\gamma$. Since the function $w_\alpha$ and exponent  are smooth, we expand in a Taylor series:
		\begin{equation*} \begin{split}
				&\gamma\leq (1- g(\eta)r)(\langle s_*, f^*\rangle r-\langle s_*, f_*\rangle r +w_\alpha(\xi,\eta))-w_\alpha(\xi,\eta)+h_1(r,\xi,\eta,\theta)r\\ &  =(\langle s_*, f^*\rangle-\langle s_*, f_*\rangle)r-  g(\eta) w_\alpha(\xi,\eta)r+h_1(r,\xi,\eta,\theta)r.
			\end{split}
		\end{equation*} 
  The residual terms of the exponent and  $w_\alpha$ have the form $ C(\xi,\eta,\theta)r^2/2$, where $C$ is the constant depending on $(\xi,\eta,\theta)$. We estimate the residuals terms by the function $h_1(r,\xi,\theta)r$, $\lim\limits_{r\rightarrow 0}h_1(r,\xi,\eta,\theta)=0$. Notice, that the function $h_1$ doesn't depend on $x(\cdot,\xi),y(\cdot,\eta)$.
Further in the paper, functions $h_i$ depend only on $r,\xi,\eta,\theta$ and
do not depend on the considered motion $x(\cdot)\in X(x_0, U_\alpha,\Delta)$ and $y(\cdot,\eta)$.

	From the definition  $p_0$, we obtain: $$\langle s_*, f^*\rangle+(1-u_\alpha(\xi))g(\xi)\leq H(\xi,s_*,u_\alpha(\xi))+h_2(r,\xi,\eta,\theta), \ h_2(r,\xi,\eta,\theta)\rightarrow0, \mbox{ as } r\rightarrow 0;$$
	From the definition $q_0$, we have:
	$$\langle s_*, f_*\rangle+(1-u_\alpha(\xi))g(\eta)\geq H(\eta,s_*,u_\alpha(\xi))-h_3(r,\xi,\theta), \ h_3(r,\xi,\eta,\theta)\rightarrow0, \mbox{ as } r\rightarrow 0;$$
	Let us substitute these values in the expression for $\gamma$. Therefore, we obtain
	\begin{equation*} \begin{split} &\gamma\leq H(\xi,s_*,u_\alpha(\xi))r+h_2(r,\xi,\eta,\theta)r-(1-u_\alpha(\xi))g(\xi)r-H(\eta,s_*,u_\alpha(\xi))r\\
	&+h_3(r,\xi,\eta,\theta)r+(1-u_\alpha(\xi))g(\eta)r.\end{split}
\end{equation*}
	From~\eqref{ineq}, we have  	\begin{equation*} \begin{split} & H(\xi,s_*,u_\alpha(\xi))-H(\eta,s_*,u_\alpha(\xi))-w_\alpha(\xi,\eta)g(\eta) \\
			& \leq H(\xi,s_*,u_\alpha(\xi))-H(\eta,s_*,u_\alpha(\xi))-w_\alpha(\xi,\eta)b\leq0,\end{split}
	\end{equation*}
where $b$ is introduced in assumption $A4$.
	Hence, \begin{equation*} \begin{split} & \gamma\leq h_2(r,\xi,\eta,\theta)r-(1-u_\alpha(\xi))g(\xi)r +h_3(r,\xi,\eta,\theta)r +(1-u_\alpha(\xi))g(\eta)r\\ & =(1-u_\alpha(\xi))(g(\eta)-g(\xi))r +h_{4}(r,\xi, \eta,\theta)r.\end{split}
	\end{equation*}
	
	Combining the previous inequality and  ~\eqref{ineq2}, we arrive at the estimate
	\begin{equation*} \begin{split} & 1-u_\alpha(\xi)\leq e^{-\int_0^r g(y(t))dt}(1-u_\alpha(\xi+f^*r))\\ & +(1-u_\alpha(\xi))(g(\eta)-g(\xi))r +h_{4}(r,\xi,\eta,\theta)r.\end{split}
	\end{equation*}
	Equivalently,
	\begin{equation*} \begin{split} & (1-u_\alpha(\xi))e^{\int_0^r g(y(t))dt}\leq (1-u_\alpha(\xi+f^*r))\\
			&+e^{\int_0^r g(y(t))dt}(1-u_\alpha(\xi))(g(\eta)-g(\xi))r +e^{\int_0^r g(y(t))dt}h_{4}(r,\xi,\eta,\theta)r.\end{split}
		\end{equation*}
	We add $(1-u_\alpha(\xi))e^{\int\limits_0^r g(x(t))dt}$ to the both parts of the previous inequality:
	\begin{equation*} \begin{split} & (1-u_\alpha(\xi))e^{\int_0^r g(x(t))dt}\leq (1-u_\alpha(\xi+f^*r))+
	(1-u_\alpha(\xi))\Bigl(e^{\int_0^r g(x(t))dt}-e^{\int_0^r g(y(t))dt}\Bigr)\\ &+
	e^{\int_0^r g(y(t))dt}(1-u_\alpha(\xi))(g(\eta)-g(\xi))r +e^{\int_0^r g(y(t))dt}h_{4}(r,\xi,\eta,\theta)r.\end{split}
\end{equation*}

	Let us expand in a Taylor series exponent and continue the values:
	\begin{equation*} \begin{split} &(1-u_\alpha(\xi))e^{\int_0^r g(x(t))dt}\leq (1-u_\alpha(\xi+f^*r)) +
	(1-u_\alpha(\xi))\Bigl(r g(\xi)-r g(\eta)\Bigr)\\
	&+\Bigl(1+r g(\eta)\Bigr)(1-u_\alpha(\xi))(g(\eta)-g(\xi))r+\Bigl(1+r g(\eta)\Bigr)h_{4}(r,\xi,\eta,\theta)r
	\\
	&\leq(1-u_\alpha(\xi+f^*r))+(1-u_\alpha(\xi))\Bigl( g(\xi)- g(\eta)\Bigr)r\\
 &+(g(\eta)-g(\xi))(1-u_\alpha(\xi))r +h_4(r,\xi,\eta,\theta)=(1-u_\alpha(\xi+f^*r))+h_4(r,\xi,\eta,\theta). \end{split}
\end{equation*}
	where $\lim\limits_{r\rightarrow 0}h_4(r,\xi,\eta,\theta)=0.$
	
To complete the proof, we will derive the conclusion of the Theorem 
 from~\eqref{main}. Further, we omit the argument $x(\cdot)$ in $\tau_\varepsilon$.
 We denote by $\mu(\cdot)$  the modulus of continuity of the function $\tilde{\sigma}$ and 
  set for $\alpha,\delta>0$
     $$l(\alpha,\delta)=-\ln\Bigl(1-\frac{\alpha+\mu(2\alpha) +h(\delta,x_0,\theta)\theta}{1- u(x_0)}\Bigr).$$ 
     
Now, we choose $\varepsilon>0$ and $\alpha>0$ satisfying the inequalities~\eqref{l1},\eqref{pars} and
$$ I+\ln(1- u(x_0))>l(\alpha,0).$$ This implies that there exists $\delta_0>0$ such that if
$\delta<\delta_0$, inequalities~\eqref{l1},\eqref{pars} and  \begin{equation} \label{l2}I+\ln(1- u(x_0))>l(\alpha,\delta)\end{equation} 
are valid. It follows from the properties  of the functions $\mu(\cdot)$ and $h(\cdot,x_0,\theta)$:
 $$ \lim\limits_{\alpha\rightarrow 0}\mu(2\alpha)=0, \ \lim\limits_{\delta\rightarrow 0}h(\delta,x_0,\theta)=0.$$ 
  Now, let a  partitions $\Delta$ be such that $\mbox{ diam} \Delta=\delta\leq \delta_0$. It produces the set  $X(x_0, U_\alpha, \Delta)$. Let $x(\cdot)\in X(x_0, U_\alpha, \Delta)$ and 
  \begin{equation} \label{ht}
      \hat{t}_1=\inf\{ t\in[0,\theta]: \mbox{ dist}(x(t), \mathcal{M}_1)\leq \varepsilon\};
\hat{t}_2=\inf\{ t\in[0,\theta]: \mbox{ dist}(x(t), \mathcal{M}_2)\leq \varepsilon\}.
  \end{equation} 
  
There are  the following  cases:
 \begin{enumerate}
	\item  $\hat{t}_1=\hat{t}_2=+\infty$;
	\item $\hat{t}_1<\hat{t}_2$;
	\item $\hat{t}_1>\hat{t}_2$.
\end{enumerate}
Case~1.   We consider inequality~\eqref{main} on $[0,\theta]$:
	\begin{equation*} \begin{split}  u_\alpha(x(\theta))&\leq 1-(1-u_\alpha(x_0))e^{\int_0^{\theta}g(x(t))dt}+ \theta e^{\int_0^{\theta}g(x(t))dt}h(\delta,x_0,\theta)\\
	&\leq 1-(1-u(x_0)-\alpha)e^{\int_0^{\theta}g(x(t))dt} + \theta e^{\int_0^{\theta}g(x(t))dt}h(\delta,x_0,\theta)\\
&	<1-(1-u(x_0))e^{b\theta} +(\alpha + \theta h(\delta,x_0,\theta))e^{\kappa\theta}<0.\end{split}
\end{equation*} Here, $\kappa$ is defined by~\eqref{kapa}. The latter is due to the choice $\alpha,\delta$ in~\eqref{pars}.	This contradicts with the condition $u_\alpha(x(\theta))>0$.
	Hence, the first case  is impossible.
	
	Case~2. Notice, that $\tau_\varepsilon= \hat{t}_1$.
 Let us consider inequality~\eqref{main} on the interval $[0,\tau_\varepsilon]$. 
	 
	  It follows from formula~\eqref{main} that
	$$1-u_\alpha(x_0)\leq e^{-\int_0^{\tau_\varepsilon} g(x(t))dt}(1-u_\alpha(x(\tau_\varepsilon)))+h(\delta,x_0,\theta)\tau_\varepsilon.$$ 
Let us remind that $u_\alpha(x_0)\leq u(x_0)+\alpha$.
From the inequalities $\|x(\tau_\varepsilon)-y_{\alpha}(x(\tau_\varepsilon))\|\leq2\alpha$ \cite[p.~246]{Subbotin}, $u(x)\geq \tilde{\sigma}(x)$ for all $x\in G\setminus M_2^\varepsilon$  and Proposition~2,  we deduce that
 \begin{equation*} u_\alpha(x(\tau_\varepsilon))>\tilde{\sigma}(y_{\alpha}(x(\tau_\varepsilon))) -\tilde{\sigma}(x(\tau_\varepsilon))+\tilde{\sigma}(x(\tau_\varepsilon))
 >-\mu(2\alpha)+\tilde{\sigma}(x(\tau_\varepsilon)).
\end{equation*}
	Plugging this into  inequality~\eqref{main}, we arrive at:
	\begin{equation} \label{ineq3} \Bigl(1- u(x_0)-\alpha-\mu(2\alpha)e^{-\int_0^{\tau_\varepsilon} g(x(t))dt}-h(\delta,x_0,\theta)\tau_\varepsilon\Bigr)e^{\int_0^{\tau_\varepsilon} g(x(t))dt}\leq 1-\tilde{\sigma}(x(\tau_\varepsilon)). \end{equation} Hence,
	\begin{equation*} \begin{split} &(1- u(x_0))\Bigl(1-\frac{\alpha+\mu(2\alpha)e^{-\int_0^{\tau_\varepsilon} g(x(t))dt}+h(\delta,x_0,\theta)\tau_\varepsilon}{1- u(x_0)}\Bigr)e^{\int_0^{\tau_\varepsilon} g(x(t))dt} \\
 &\leq 1-\tilde{\sigma}(x(\tau_\varepsilon)).
 \end{split}
\end{equation*}
	We take a logarithm from both sides of this inequality and  conclude:
	\begin{equation*} \begin{split} & \ln(1- u(x_0))+\int_0^{\tau_\varepsilon} g(x(t))dt \leq \ln (1-\tilde{\sigma}(x(\tau_\varepsilon)))  \\
	&-\ln\Bigl(1-\frac{\alpha+\mu(2\alpha)+h(\delta,x_0,\theta)\theta}{1- u(x_0)}\Bigr).\end{split}
\end{equation*}
	Notice, that $1>\tilde{\sigma}(x(\tau_\varepsilon))$, while $x(\tau_\varepsilon)$ belongs to the $\varepsilon$-neighbourhood  of the border~$\mathcal{M}_1$.   Let us recall that the Kruzhkov's transformation $$v(x)=-\ln(1-u(x)),$$ such that $u(\cdot)$ is the minimax solution of problem~\eqref{h2} iff $v(\cdot)$ is the minimax solution of problem~\eqref{h1}.
	Furthermore,
	\begin{equation}  \label{ineq1}\begin{split}  \ln(1- u(x_0))+\int_0^{\tau_\varepsilon} g(x(t))dt &\leq\ln(1-\tilde{\sigma}(x(\tau_\varepsilon))+l(\alpha,\delta),\\
	\hat{\sigma}(x(\tau_\varepsilon)) +\int_0^{\tau_\varepsilon} g(x(t))dt &\leq -\ln(1- u(x_0))+l(\alpha,\delta).\end{split}
\end{equation}

 Hence, we have the inequality:
$$\hat{\sigma}(x(\tau_\varepsilon)) +\int_0^{\tau_\varepsilon} g(x(t))dt\leq I.$$  

	Case~3. In this case, $\tau_\varepsilon=\hat{t}_2$. We again consider inequality~\eqref{main} on the interval $[0,\tau_\varepsilon]$. It follows from formula~\eqref{main} that
	$$1-u_\alpha(x_0)\leq e^{-\int_0^{\tau_\varepsilon} g(x(t))dt}(1-u_\alpha(x(\tau_\varepsilon)))+h(\delta,x_0,\theta)\tau_\varepsilon.$$ Using inequality~\eqref{ineq3}, we obtain the 
\begin{equation*} \begin{split} &
    (1- u(x_0))e^{\int_0^{\tau_\varepsilon} g(x(t))dt} \leq \\
    &1-\tilde{\sigma}(x(\tau_\varepsilon))+ \alpha  e^{\int_0^{\tau_\varepsilon} g(x(t))dt} +\mu(2\alpha)+h(\delta,x_0,\theta)\tau_\varepsilon e^{\int_0^{\tau_\varepsilon} g(x(t))dt}.
    \end{split}
\end{equation*} 

Notice, that $1-\tilde{\sigma}(x(\tau_\varepsilon))=0$,  while $x(\tau_\varepsilon)$ belongs to the $\varepsilon$-neighborhood of the border $\mathcal{M}_2$. From that, we obtain
$$(1- u(x_0)) \leq \alpha+\mu(2\alpha)e^{-\int_0^{\tau_\varepsilon} g(x(t))dt} +h(\delta,x_0,\theta)\theta\leq  \alpha+\mu(2\alpha)+h(\delta,x_0,\theta)\theta.$$ 

Since $ \lim\limits_{\alpha\rightarrow 0}\mu(2\alpha)=0, $ $ \lim\limits_{\delta\rightarrow 0}h(\delta,x_0,\theta)=0$, the following inequality holds: $$ (1- u(x_0))> \alpha+\mu(2\alpha)+h(\delta,x_0,\theta)\theta. $$  It contradicts~\eqref{main}.
Hence, case~3 is impossible.

Combining the aforementioned  cases, we conclude that 
$$ J_1^\varepsilon(x_0,U_\alpha,\Delta)\leq I.$$
	 
\end{proof}

Now, we construct $\varepsilon$-optimal strategies for the second player.

Let  $x_0\in G$ be given and $u(\cdot)$ is a minimax solution of problem~\eqref{h2}. We choose $I>0 $ and
$I<-\ln (1-u(x_0))$. Then, according to the definition of the minimax solution,  there exists a subsolution $u_\natural$ of problem~\eqref{h2} such that $$I<-\ln(1-u_\natural(x_0)).$$
 We can construct $\varepsilon$-optimality strategy for the second player in the similar way.

We transform the subsolution  $u_\natural(\cdot)$ in the following way
\begin{equation}\label{v-a}
	v_\alpha(x)=\max_{y\in \mathbb{R}^d}[ u_\natural(y)-w_\alpha(x,y)],
\end{equation}
where $w_\alpha$ satisfies~\eqref{w}.

The function $y\rightarrow u_\natural(y)-w_\alpha(x,y)$ is upper semicontinuous, therefore maximum in expression~\eqref{v-a} is attained at a point $y_\alpha$  such that $\|x-y_\alpha\|\leq1$.  It is proved in ~\cite{Subbotin} that $\|x-y_\alpha\|\leq 2\alpha$, where $y_\alpha$ is the argmax in~\eqref{v-a}.
We set  $s_\alpha(x)=-D_xw_\alpha(x,y_\alpha(x))=D_yw_\alpha(x,y_\alpha(x))$.

\begin{theorem} Let a point $x_0 \in G$ be given.  Then for every $I<-\ln(1-u_\natural(x_0))$, there exist  $\varepsilon>0$,
	 $\alpha>0$ and $\delta_0>0$ such that for every partition $\Delta$ satisfying $\mbox{diam }\Delta <\delta_0$, one has 
 the estimate:
	\begin{equation} \label{tmain2}   \quad
	J_2^\varepsilon(x_0, V_\alpha, \Delta) \geq I,\end{equation}
	where  $V_\alpha$ is a feedback strategy of the second player, defined by $V_\alpha(x)=q_0(x,s_\alpha(x))$. Here, $q_0$ satisfies formula~\eqref{predstr}.
\end{theorem}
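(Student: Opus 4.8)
The plan is to reproduce the argument of the previous theorem in its dual form: replace the minimax solution by the subsolution $u_\natural$, the inf-convolution by the sup-convolution $v_\alpha$ from~\eqref{v-a}, and the dual vector $D_xw_\alpha$ by its negative $s_\alpha(x)=D_yw_\alpha(x,y_\alpha(x))=-D_xw_\alpha(x,y_\alpha(x))$, where $y_\alpha(x)$ is the argmax in~\eqref{v-a}. The core of the proof is the reverse of~\eqref{main}: for every step-by-step motion $x(\cdot)\in X(x_0,V_\alpha,\Delta)$, every $t_i\in\Delta$ with $t_i<\theta$ and $\mathrm{dist}(x(t_i),\partial G)>\varepsilon$, and every $r\in[t_i,t_{i+1}]\cap[0,\theta]$,
\begin{equation}\label{main2}
	1-v_\alpha(x(t_i))\ge e^{-\int_{t_i}^r g(x(t))dt}\bigl(1-v_\alpha(x(r))\bigr)-h(\delta_0,x_0,\theta)(r-t_i),
\end{equation}
with $h(\delta_0,x_0,\theta)\to0$ as $\delta_0\to0$.

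To obtain~\eqref{main2} I would set, as before, $\xi=x(t_i)$, $\eta=y_\alpha(\xi)$, $s_*=s_\alpha(\xi)$, $q_*=V_\alpha(\xi)=q_0(\xi,s_*)$ and now $p_*=p_0(\eta,s_*)$, and reduce to $t_i=0$. Because $u_\natural$ is a subsolution, its hypograph is weakly invariant, which gives the reverse of~\eqref{inv2}: along the auxiliary motion $\dot y\in\mathrm{co}\{f(y,p_*,q):q\in Q\}$, $y(0)=\eta$, one has $1-u_\natural(\eta)\ge e^{-\int_0^r g(y(t))dt}(1-u_\natural(\eta+f_*r))$, with $f_*=\frac1r\int_0^r\dot y\,dt$; the bound $\mathrm{dist}(\eta,\partial G)>\alpha$ together with $\delta_0 m\le\alpha$ keeps $y(\cdot)$ in $G$, so the invariance applies. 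Combining this with $v_\alpha(\xi)=u_\natural(\eta)-w_\alpha(\xi,\eta)$ and the sup-convolution inequality $1-v_\alpha(\xi+f^*r)\le1-u_\natural(\eta+f_*r)+w_\alpha(\xi+f^*r,\eta+f_*r)$ (where $f^*=\frac1r\int_0^r\dot x\,dt$, $\dot x\in\mathrm{co}\{f(x,p,q_*):p\in P\}$) reduces the claim, exactly as in~\eqref{ineq2}, to the residual bound $\tilde\gamma\ge-h_4 r$ for $\tilde\gamma=w_\alpha(\xi,\eta)-e^{-\int_0^r g(y)dt}w_\alpha(\xi+f^*r,\eta+f_*r)$.

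The decisive feature is the sign flip $s_*=-D_xw_\alpha(\xi,\eta)$, which reverses every extremal-shift estimate into the direction a lower bound requires: the choice $q_*=q_0(\xi,s_*)$ now yields $\langle s_*,f^*\rangle\ge H(\xi,s_*,v_\alpha)-g(\xi)(1-v_\alpha)-h_2 r$, while $p_*=p_0(\eta,s_*)$ yields $\langle s_*,f_*\rangle\le H(\eta,s_*,v_\alpha)-g(\eta)(1-v_\alpha)+h_3 r$. A Taylor expansion as in the previous proof gives $\tilde\gamma\ge(\langle s_*,f^*\rangle-\langle s_*,f_*\rangle)r+g(\eta)w_\alpha(\xi,\eta)r-h_1 r$; substituting the two estimates and invoking~\eqref{ineq} in its swapped form $H(\xi,s_*,v_\alpha)-H(\eta,s_*,v_\alpha)\ge-bw_\alpha(\xi,\eta)$ (legitimate by the symmetry $w_\alpha(\xi,\eta)=w_\alpha(\eta,\xi)$) together with $g(\eta)\ge b$ gives $\tilde\gamma\ge(g(\eta)-g(\xi))(1-v_\alpha)r-h_4 r$. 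The discrepancy $(g(\eta)-g(\xi))(1-v_\alpha)r$ then cancels, precisely as in the previous proof, once $\int_0^r g(y)dt$ is replaced by $\int_0^r g(x)dt$ in the exponent, which establishes~\eqref{main2}.

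To finish, I would fix $\theta\ge I/b$ and choose $\varepsilon,\alpha,\delta_0$ so small that the analogues of~\eqref{l1} and~\eqref{pars} hold and $-\ln(1-u_\natural(x_0))-l(\alpha,\delta)>I$, which is possible precisely because $I<-\ln(1-u_\natural(x_0))$ and $l(\alpha,\delta)\to0$. For an arbitrary $x(\cdot)\in X(x_0,V_\alpha,\Delta)$ I split into the three cases of~\eqref{ht}. If $\hat t_1=\hat t_2=+\infty$, then $\tau_\varepsilon>\theta$ and $\int_0^{\tau_\varepsilon}g\,dt\ge b\theta\ge I$. If $\hat t_2<\hat t_1$, then $\tau_\varepsilon=\hat t_2$, $x(\tau_\varepsilon)\in M_2^\varepsilon$, so $\hat\sigma(x(\tau_\varepsilon))=+\infty$ and the payoff is infinite. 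If $\hat t_1<\hat t_2$, then $\tau_\varepsilon=\hat t_1$ and, applying~\eqref{main2} on $[0,\tau_\varepsilon]$ together with $v_\alpha(x_0)\ge u_\natural(x_0)-\alpha$ and the estimate $v_\alpha(x(\tau_\varepsilon))\le\tilde\sigma(x(\tau_\varepsilon))+\mu(2\alpha)$ (from $\|x(\tau_\varepsilon)-y_\alpha(x(\tau_\varepsilon))\|\le2\alpha$, the continuity of $u_\natural$ on $\mathcal M_1$, and $\tilde\sigma<1$ there), taking logarithms leads to $\hat\sigma(x(\tau_\varepsilon))+\int_0^{\tau_\varepsilon}g\,dt\ge-\ln(1-u_\natural(x_0))-l(\alpha,\delta)>I$. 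As this holds uniformly over all motions, $J_2^\varepsilon(x_0,V_\alpha,\Delta)\ge I$. The main obstacle is the residual estimate of the third paragraph: it is the opposite sign of $s_\alpha$ that turns the extremal-shift inequalities and the swapped form of~\eqref{ineq} in the right direction, and one must verify that the remainders $h_1,\dots,h_4$ vanish uniformly in $x(\cdot)$ as $\delta_0\to0$; the boundary estimate $v_\alpha(x(\tau_\varepsilon))\le\tilde\sigma(x(\tau_\varepsilon))+\mu(2\alpha)$ is a secondary delicate point.
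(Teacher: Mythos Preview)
Your outline follows the paper's approach almost exactly: the sup-convolution~\eqref{v-a}, the flipped sign of $s_\alpha$, the hypograph-viability inequality for the subsolution yielding the reverse of~\eqref{inv2}, the derivation of~\eqref{main2}, and the three-case split are all what the paper does. Two points, however, are handled differently in the paper and are not quite right in your sketch.

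\textbf{The boundary estimate in Case~3.} Your bound $v_\alpha(x(\tau_\varepsilon))\le\tilde\sigma(x(\tau_\varepsilon))+\mu(2\alpha)$ tries to mirror the estimate used for the first player, where one invoked $u(x)\ge\tilde\sigma(x)$ on $G\setminus M_2^\varepsilon$. That inequality came from $u$ being the \emph{minimal supersolution}; there is no dual inequality $u_\natural\le\tilde\sigma$ for a subsolution away from $\partial G$. What you actually have is $v_\alpha(x(\tau_\varepsilon))\le u_\natural(y_\alpha(x(\tau_\varepsilon)))\le u_\natural(x(\tau_\varepsilon))+\mu(\alpha)$ from upper semicontinuity, and then you must pass from $u_\natural(x(\tau_\varepsilon))$ to $\tilde\sigma(x(\tau_\varepsilon))$ via a point $x^*\in\mathcal M_1$ with $\|x(\tau_\varepsilon)-x^*\|=\varepsilon$, using continuity of $u_\natural$ at $\partial G$ (giving a modulus $\omega_{M_1^{\varepsilon_0}}(\varepsilon)$) together with the Lipschitz bound $|\tilde\sigma(x^*)-\tilde\sigma(x(\tau_\varepsilon))|\le\lambda_{\tilde\sigma}\varepsilon$. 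The resulting error term therefore depends on $\varepsilon$ as well as on $\alpha,\delta$, so your $l$ must be $l(\varepsilon,\alpha,\delta)$, and $\varepsilon$ must be chosen small along with $\alpha,\delta_0$.

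\textbf{The case $u_\natural(x_0)=1$.} Your logarithmic step ``$\hat\sigma(x(\tau_\varepsilon))+\int_0^{\tau_\varepsilon}g\,dt\ge-\ln(1-u_\natural(x_0))-l(\alpha,\delta)$'' breaks down when $u_\natural(x_0)=1$, since $\ln 0$ appears. The paper treats this case separately: one shows directly that $(\alpha+h(\delta,x_0,\theta)\theta+\mu(\alpha)+\omega(\varepsilon)+\lambda_{\tilde\sigma}\varepsilon)e^{\kappa\theta}\ge 1-\tilde\sigma(x(\tau_\varepsilon))\ge e^{-\Sigma}$ would follow from~\eqref{main2} in Case~3, which is impossible for small parameters; hence Case~3 cannot occur and only Cases~1 and~2 remain, both giving payoff $\ge I$.
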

\begin{proof}
We follow the scheme of the proof of  Theorem~4. We consider two cases of the initial data: 
\begin{itemize}
    \item $u_\natural(x_0)<1$;
    \item $u_\natural(x_0)=1$.
\end{itemize}

	Further, let $\theta<\frac{I}{b}$. Let us remind the distance $d(M_1,M_2)$ between the sets $M_1$ and $M_2$:
 $$d(M_1,M_2)=\min\Bigl\{ \| x-y\|:  \ x\in M_1, y\in M_2\Bigr\} .$$
 
We choose $0<\varepsilon_0<d(M_1,M_2)$ such that $u_\natural(\cdot)$ is continuous on the set $M=M_1^{\varepsilon_0}\cap K$ where $K$ is defined by~\eqref{setk}.  Thus, the modulus of continuity on the set $M$
 $$\omega_{M}(\varepsilon)=\sup\{|u_\natural(x_1)-u_\natural(x_2)|:  \ x_1,x_2\in M, \  \|x_1-x_2\|\leq \varepsilon\}$$ is well defined.
From the upper semicontinuity of the function $u_\natural$ and the estimate $\|y_\alpha(x)-x\|\leq 2\alpha$, we deduce that, for every bounded set $N\subset G $, there exists a number $\mu(\alpha)$ such that $$\forall x\in N, \ u_\natural(y_\alpha(x))\leq u_\natural(x)+\mu(\alpha), \ \lim\limits_{\alpha\rightarrow 0}\mu(\alpha)=0.$$
Now, we consider the first case when $u_\natural(x_0)<1$. Let us denote $$l(\varepsilon,\alpha,\delta)=\ln\Bigl(1+\frac{\alpha+h(\delta,x_0,\theta)\theta+\mu(\alpha)+\omega_{M^{\varepsilon_0}}(\varepsilon)+\lambda_{\tilde{\sigma}}\varepsilon}{1-u_\natural(x_0)}\Bigr),$$
 where $\lambda_{\tilde{\sigma}}$ is Lipschitz constant for the function $\tilde{\sigma}$.  Notice, that $\lim\limits_{\varepsilon,\alpha,\delta \rightarrow 0}l(\varepsilon,\alpha,\delta)=0. $

 Now, we choose $\varepsilon\in(0,\varepsilon_0]$ and $\alpha>0$ satisfying  inequality~\eqref{l1} and
$$ -I-\ln(1- u(x_0))>l(\varepsilon,\alpha,0).$$ This implies that there exists $\delta_0>0$ such that for  
$\delta\in(0,\delta_0)$ the following inequality holds: \begin{equation}  \label{l}
         l(\varepsilon,\alpha,\delta)<-\ln(1-u_\natural(x_0))-I.
    \end{equation} 
   Let $\Delta$ be a partition such that $\mbox{ diam} \Delta=\delta<\delta_0$. It produces the set of motions  $X(x_0, V_\alpha, \Delta)$.
    Further, we consider a step-by-step motion $x(\cdot) \in X(x_0,V_\alpha, \Delta)$. First,  assume that $t_i\in \Delta$, $t_i<\theta$ and $\mbox{dist}(x(t_i);\partial G)>\varepsilon$. Then, for all  $r\in [t_i,t_{i+1}]\cap [0,\theta]$,
	\begin{equation} \label{main2}
		1-v_\alpha(x(t_i))\geq e^{-\int_{t_i}^r g(x(t))dt}(1-v_\alpha(x(r)))-h(\delta,x(t_i),\theta)(r-t_i),
	\end{equation}
	where $\lim\limits_{\delta\rightarrow 0} h(\delta,x(t_i),\theta)=0$. Further, we omit the argument of $\tau_\varepsilon$. The proof of formula ~\eqref{main2} is  the same as the proof of~\eqref{main}. 	Now, we deduce formula~\eqref{tmain2}  from formula~\eqref{main2}.
		Let $x(\cdot) \in X(x_0,V_\alpha, \Delta)$. 
Below, we will consider the following  cases for $\hat{t}_1$, $\hat{t}_2$ defined by \eqref{ht}:
 \begin{enumerate}
	\item  $\hat{t}_1=\hat{t}_2=+\infty$;
	\item $\hat{t}_1>\hat{t}_2$;
	\item $\hat{t}_1<\hat{t}_2$.
\end{enumerate}

	Case~1. The second player evades the $\varepsilon$-neighbourhood of sets $\mathcal{M}_1$ and $\mathcal{M}_2$ before the instant $\theta$ using the strategy $V_\alpha$.  Since $\tau_\varepsilon>\theta$, we obtain 
 $$ \int\limits_0^{\tau_\varepsilon} g(x(t))dt>\int\limits_0^\theta bdt\geq I,$$ due to the choice of $\theta$.  Let us recall that $\hat{\sigma}(x(\tau_\varepsilon))\geq0$, hence $
 \hat{\sigma}(x(\tau_\varepsilon))+\int\limits_0^{\tau_\varepsilon} g(x(t))dt\geq I$.

Case~2. The second player touches the $\varepsilon$-neighborhood of the set $M_2$ and avoids the set $M_1^\varepsilon$, so, $\tau_\varepsilon=\hat{t}_2$ and
 $$\hat{\sigma}(x(\tau_\varepsilon))+\int\limits_0^{\tau_\varepsilon} g(x(t))dt=+\infty.$$ 

	Case~3. In this case $\tau_\varepsilon=\hat{t}_1$. 
  From~\eqref{v-a}, we obtain
 \begin{equation} \label{eq3-1}
  v_\alpha(x_0)\geq u_\natural(x_0)-\alpha   
 \end{equation} 
 and $ v_\alpha(x(\tau_\varepsilon))= u_\natural(y_\alpha(x(\tau_\varepsilon)))-w_\alpha(y_\alpha(x(\tau_\varepsilon)),x(\tau_\varepsilon))$. Thus, 
 we have 
 \begin{equation} \label{eq3-2}
    v_\alpha(x(\tau_\varepsilon)) <u_\natural(x(\tau_\varepsilon)) +\mu(\alpha).  
 \end{equation}
  We substitute  inequalities~\eqref{eq3-1},~\eqref{eq3-2} in~\eqref{main2} on the interval $[0, \tau_\varepsilon]$:
 $$ \Bigl(1-u_\natural(x_0)+\alpha+h(\delta,x_0,\theta)\tau_\varepsilon+\mu(\alpha)e^{-\int_{0}^{\tau_\varepsilon} g(x(t))dt}\Bigr)e^{\int_{0}^{\tau_\varepsilon} g(x(t))dt}\geq 1-u_\natural(x(\tau_\varepsilon)).$$ 
 
 Now, we find a point $x^*\in \mathcal{M}_1$ and $\|x(\tau_\varepsilon)-x^*\|=\mbox{dist } (x(\tau_\varepsilon);\mathcal{M}_1)$. Notice,  that
 $\|x(\tau_\varepsilon)-x^*\|= \varepsilon$.
 
Let us estimate $$|u_\natural(x(\tau_\varepsilon))-\tilde{\sigma}(x^*)|\leq\omega_{M_1^{\varepsilon_0}}(\varepsilon).$$
 Moreover, $$\|\tilde{\sigma}(x^*)-\tilde{\sigma}(x(\tau_\varepsilon))\|\leq\lambda_{\tilde{\sigma}}\|x^*-x(\tau_\varepsilon)\|=\lambda_{\tilde{\sigma}} \varepsilon. $$  Hence, we get
$$1-u_\natural(x(\tau_\varepsilon))+\tilde{\sigma}(x^*)+(\tilde{\sigma}(x(\tau_\varepsilon))-\tilde{\sigma}(x^*)) -\tilde{\sigma}(x(\tau_\varepsilon))$$ 
$$>1-\omega_{M^{\varepsilon_0}}(\varepsilon)-\lambda_{\tilde{\sigma}}\varepsilon-
\tilde{\sigma}(x(\tau_\varepsilon)). $$
   Applying these estimates to formula~\eqref{main2}, we obtained
   $$ \Bigl(1-u_\natural(x_0)+\alpha+h(\delta,x_0,\tau_\varepsilon)\tau_\varepsilon+[\mu(\alpha)+\omega_{M^{\varepsilon_0}}(\varepsilon)+\lambda_{\tilde{\sigma}}\varepsilon]e^{-\int_{0}^{\tau_\varepsilon} g(x(t))dt}\Bigr)e^{\int_{0}^{\tau_\varepsilon} g(x(t))dt}$$
   $$\geq 1-\tilde{\sigma}(x(\tau_\varepsilon)).$$

Since $\tau_\varepsilon<\theta$, we have
$$\Bigl(1-u_\natural(x_0)+\alpha+h(\delta,x_0,\tau_\varepsilon)\theta+\mu(\alpha)+\omega_{M^{\varepsilon_0}}(\varepsilon)+\lambda_{\tilde{\sigma}}\varepsilon \Bigr)e^{\int_{0}^{\tau_\varepsilon} g(x(t))dt}$$
   $$\geq 1-\tilde{\sigma}(x(\tau_\varepsilon)).$$
We take logarithm from the both parts of this inequality, and derive the estimate
    $$ \ln(1-u_\natural(x_0))+\int_{0}^{\tau_\varepsilon} g(x(t))dt$$ $$+\ln\Bigl(1+\frac{\alpha+h(\delta,x_0,\theta)\theta+\mu(\alpha)+\omega_{M^{\varepsilon_0}}(\varepsilon)+\lambda_{\tilde{\sigma}}\varepsilon}{1-u_\natural(x_0)}\Bigr)\geq \ln(1-\tilde{\sigma}(x(\tau_\varepsilon))).$$ 
    
Hence, $$-\ln(1-\tilde{\sigma}(x(\tau_\varepsilon)))+\int_{0}^{\tau_\varepsilon} g(x(t))dt\geq I.  $$
Thus, we deduce that $\forall \varepsilon<\varepsilon_0,$ $ \alpha, \delta $ satisfying~\eqref{l} the following inequality
$$J_2^\varepsilon(x_0,V_\alpha,\Delta_n)\geq I$$
is valid.

Notice, that if $\hat{\sigma}(\cdot)$ is a constant function and $g(\cdot)$ is a constant function  (that is we deal with time optimal problem), then  case~3 is impossible. It is proved in \cite[p.767]{Kumkov}.

Let us consider the second case when $u_\natural(x_0)=1$.  Notice, that $\theta<+\infty$.
We define a function
\begin{equation*} 
    l_1(\varepsilon,\alpha,\delta)=(\alpha+h(\delta,x_0,\theta)\theta+\mu(\alpha)+\omega_{M^{\varepsilon_0}}(\varepsilon)+\lambda_{\tilde{\sigma}}\varepsilon)e^{\kappa\theta}).
\end{equation*}
Now, we choose $\varepsilon\in (0, \varepsilon_0]$ and $\alpha>0$  satisfying inequality~\eqref{pars} and
$$l_1(\varepsilon,\alpha,0)   < e^{-\Sigma} .$$ Here, $\Sigma$ is a constant from condition $A5$.
This implies that there exists $\delta_0>0$  such that for $\delta\in (0,\delta_0)$ the following
inequality holds
\begin{equation} \label{l11}
    l_1(\varepsilon,\alpha,\delta)   < 1-\tilde{\sigma}(x(\tau_\varepsilon)).
          \end{equation}
 Furthermore, let $\Delta$ be a partition such that $\mbox{diam}\Delta=\delta<\delta_0 $. It produces the set of motions
$X(x_0, V_\alpha, \Delta).$  
Recall, that $\varepsilon,\alpha,\delta$ satisfying~\eqref{pars},\eqref{l11} and $\varepsilon<\varepsilon_0$.
              For $x(\cdot)\in X(x_0, V_\alpha, \Delta)$,   we consider the same steps as above for~\eqref{ht}. 
     
If $\hat{t}_1=\hat{t}_2=+\infty$,  the proof coincidences with the same case  when $u_\natural(x_0)<1$. 

If $\hat{t}_1>\hat{t}_2$, the proof coincidences with the same case $u_\natural(x_0)<1$.

If $\hat{t}_1<\hat{t}_2$,
from formula~\eqref{main2}, we receive
\begin{equation*}
       l_1(\varepsilon,\alpha,\delta)
   \geq 1-\tilde{\sigma}(x(\tau_\varepsilon)). 
     \end{equation*}
 Thus, for $\varepsilon,\alpha,\delta$ satisfying~\eqref{pars},\eqref{l11} and $\varepsilon<\varepsilon_0$, we  come to  contradiction with formula~\eqref{main2}. So, this case is impossible under $u_\natural(x_0)=1$. 

 Combining the aforementioned cases, we conclude that
$$J_2^\varepsilon(x_0,V_\alpha,\Delta_n)\geq I$$
 holds true.
	
 \end{proof}

\begin{remark}
Under assumptions $A1$--$A4$ in problem~\eqref{Kolpakova:eq2} --~\eqref{funcl} there exists the value function $\operatorname{Val}$ and
$u(x)=1-e^{-\operatorname{Val}(x)}$, where $u$ is the unique minimax solution of problem~\eqref{h2}.
\end{remark}

\section{Conclusion}
This paper is generalized known results about the existence of the value function in the time-optimal problems and in the time-optimal problems with lifeline. Besides, we proved the existence of the minimax solution in the corresponding Dirichlet problem for the Hamilton---Jacobi equation under standard condition on the dynamics and the boundary function. We do not assume additional conditions on the boundary of the target set and the lifeline. We shew the coincidence the value function and Kruzhkov's transform of the minimax solution. Notice, that under our assumptions the value function can be discontinuous.

In the paper, we consider the payoff with integral part depending on only the trajectory of the players. We plan to consider the case  when the payoff will depend on not only the trajectory  but on the controls of the players. We will research the existence of the value function and coincidence with the minimax solution of Hamilton---Jacobi equation in this case. 

\bmhead{Acknowledgments}
The author would like to thank Prof. Yu. Averboukh for his valuable comments and suggestions. 
%%===================================================%%
%% For presentation purpose, we have included        %%
%% \bigskip command. please ignore this.             %%
%%===================================================%%

%%===========================================================================================%%
%% If you are submitting to one of the Nature Portfolio journals, using the eJP submission   %%
%% system, please include the references within the manuscript file itself. You may do this  %%
%% by copying the reference list from your .bbl file, paste it into the main manuscript .tex %%
%% file, and delete the associated \verb+\bibliography+ commands.                            %%
%%===========================================================================================%%

\bibliography{sn-6}
\end{document}